\newcommand{\hf}{\textstyle{\frac{1}{2}}}
\title{Sparse Principal Components Analysis}
\author{Iain M. Johnstone and Arthur Yu Lu \\
Stanford University and Renaissance Technologies}
\date{January 1, 2004}
\newtheorem{lemma}{Lemma}
\newtheorem{theorem}{Theorem}
\newtheorem{proposition}{Proposition}
\begin{document}
\maketitle
\begin{center}
\textbf{Extended Abstract}
\end{center}
\small Principal components analysis (PCA) is a classical method for
the reduction of dimensionality of data in the form of $n$
observations (or cases) of a vector with $p$ variables. Contemporary
data sets often have $p$ comparable to, or even much larger than $n$.
Our main assertions, in such settings, are (a) that some initial
reduction in dimensionality is desirable before applying any PCA-type
search for principal modes, and (b) the initial reduction in
dimensionality is best achieved by working in a basis in which the
signals have a sparse representation.  We describe a simple asymptotic
model in which the estimate of the leading principal component vector
via standard PCA is consistent if and only if $p(n)/n \rightarrow 0$.
We provide a simple algorithm for selecting a subset of coordinates
with largest sample variances, and show that if PCA is done on the
selected subset, then consistency is recovered, even if $p(n) \gg n$.

Our main setting is that of signals and images, in which the number of
sampling points, or pixels, is often comparable with or larger than
the number of cases, $n$. Our particular example here is the
electrocardiogram (ECG) signal of the beating heart, but similar approaches
have been used, say, for PCA on libraries of face images.

Standard PCA involves an $O( \min (p^3, n^3))$ search for directions
of maximum variance. But if we have some \textit{a priori} way of selecting $k
\ll \min(n,p)$ coordinates in which most of the variation among cases
is to be found, then the complexity of PCA is much reduced, to $O(k^3)$.
This is a computational reason, but if there is instrumental or other
observational noise in each case that is uncorrelated with or
independent of relevant case-to-case variation, then there is another
compelling reason to preselect a small subset of variables before running PCA.

Indeed, we construct a model of factor analysis type and show that 
ordinary PCA can produce a consistent (as $n \rightarrow \infty$)
estimate of the principal factor if and only if $p(n)$ is
asymptotically of smaller order than $n$.  Heuristically, if $p(n) \geq c n$,
there is so much observational noise and so many dimensions over which
to search, that a spurious noise maximum will always drown out the
true factor.

Fortunately, it is often reasonable to expect such small subsets of
variables to exist: Much recent research in signal and image analysis
has sought orthonormal basis and related systems in which typical
signals have \textit{sparse} representations: most co-ordinates
have small signal energies.  If such a basis is used to
represent a signal -- we use wavelets as the classical example here --
then the variation in many coordinates is likely to be very small.

Consequently, we study a simple ``sparse PCA'' algorithm with
the following ingredients:
a) given a suitable orthobasis, compute coefficients for each case,
b) compute sample variances (over cases) for each coordinate
in the basis, and select the $k$ coordinates of largest sample variance,
c) run standard PCA on the selected $k$ coordinates, obtaining up
to $k$ estimated eigenvectors,
d) if desired, use soft or hard thresholding to denoise these estimated
eigenvectors, and
e) re-express the (denoised) sparse PCA eigenvector estimates in the
original signal domain.

We illustrate the algorithm on some exercise ECG data, and also
develop theory to show 
in a single factor model, under an appropriate sparsity
assumption, that it indeed overcomes the inconsistency problems when $p(n) \geq
cn,$ and yields consistent estimates of the principal factor.

\normalsize


\section{Introduction}
\label{sec:introduction}

Suppose $\{ x_i, i = 1, \ldots, n \}$ is a dataset of $n$ observations
on $p$ variables. Standard principal components analysis (PCA) looks
for vectors $\xi$ that maximize
\begin{equation}
  \label{eq:pcacrit}
  \mbox{Var} \, (\xi^T x_i ) / \| \xi \|^2.
\end{equation}
If $\xi_1, \ldots , \xi_k$ have already been found by this
optimization, then the maximum defining $\xi_{k+1}$ is taken over
vectors $\xi$ orthogonal to $\xi_1, \ldots, \xi_k$.

Our interest lies in situations in which each $x_i$ is a realization
of a possibly high dimensional signal, so that $p$ is comparable in
magnitude to $n$, or may even be larger. 
In addition, we have in mind settings in which the signals $x_i$
contain localized features, so that the principal modes of variation
sought by PCA may well be localized also.

Consider, for example, the sample of an electrocardiogram (ECG) 
in Figure \ref{fig:ecgsamp} showing some 13 consecutive heart beat cycles as
recorded by one of the standard ECG electrodes.
Individual beats are notable for features such as the sharp spike
(``QRS complex'') and the subsequent lower peak (``T wave''),
shown schematically in the second panel.
The presence of these local features, of differing spatial scales,
suggests the use of wavelet bases for efficient representation.
Traditional ECG analysis focuses on averages of a series of beats.  If
one were to look instead at beat to beat \textit{variation}, one
might expect these local features to play a significant role in
the principal component eigenvectors.

\begin{figure*}[htb]
     \begin{center}
       \leavevmode
\centerline{\includegraphics[ width = .8\textwidth
,angle = 0]{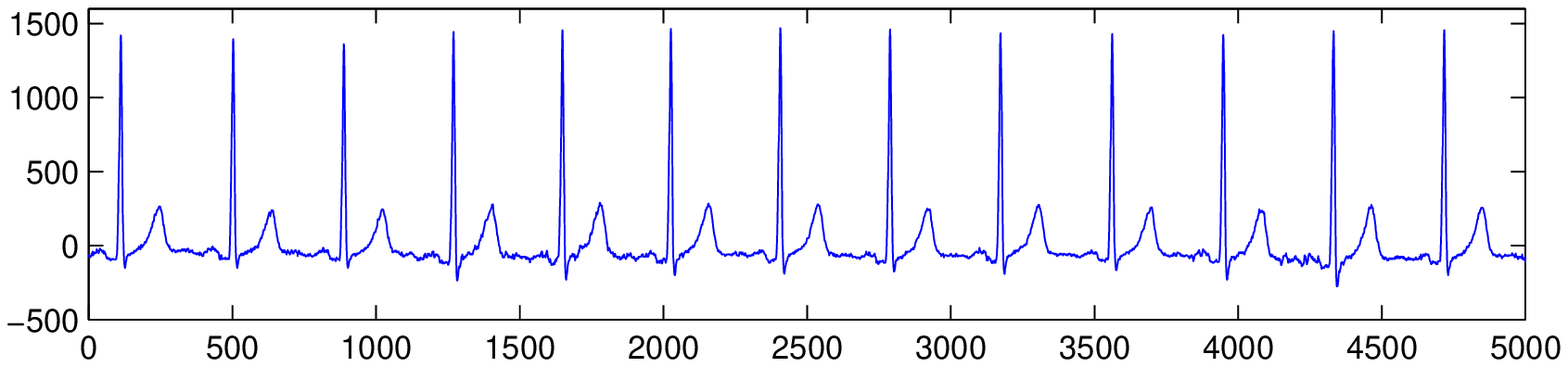}}
\centerline{\includegraphics[ height = 1.5in]{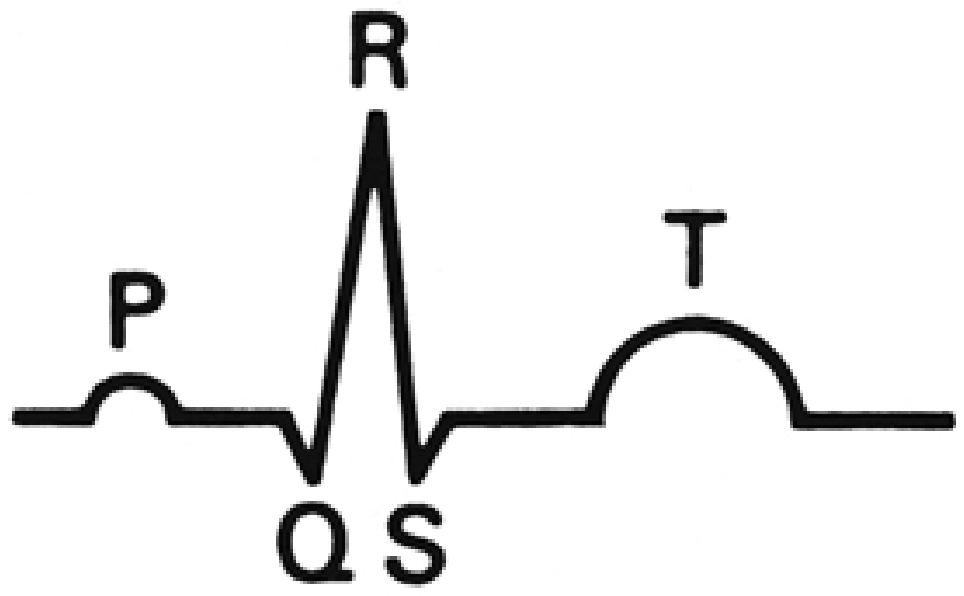}}
\caption{ (a) Sample of thirteen beats from one electrode of an
  electrocardiogram taken in the laboratory of Victor Froelicher, MD,
  Palo Alto VA.
(b) Cartoon of the key features of the cardiac cycle reflected in the
ECG trace, from \cite{hamp97}.}
       \label{fig:ecgsamp}
     \end{center}
\end{figure*}


Returning to the general situation, the main contentions of this paper
are:

(a) that when $p$ is comparable to $n$, some reduction
in dimensionality is desirable before applying any PCA-type search for
principal modes, and

(b) the reduction in dimensionality is best achieved by working in a
basis in which the signals have a sparse representation.

We will support these assertions with arguments based on statistical
performance and computational cost.

We begin, however, with an illustration of our results on
a simple constructed example.
Consider a single component (or single factor) model, in which, when
viewed as $p-$dimensional column vectors
\begin{equation}
  \label{eq:singlecpt}
  x_i = v_i \rho + \sigma z_i, \qquad \quad i = 1, \ldots, n
\end{equation}
in which $\rho \in \mathbb{R}^p$ is the single component to be estimated,
$v_i \sim N(0,1)$ are i.i.d. Gaussian random effects and $z_i \sim
N_p(0,I)$ are independent $p-$dimensional noise vectors.

\begin{figure*}[htb]
     \begin{center}
       \leavevmode
        \centerline{\includegraphics[ width = .8\textwidth,angle =
    -90]{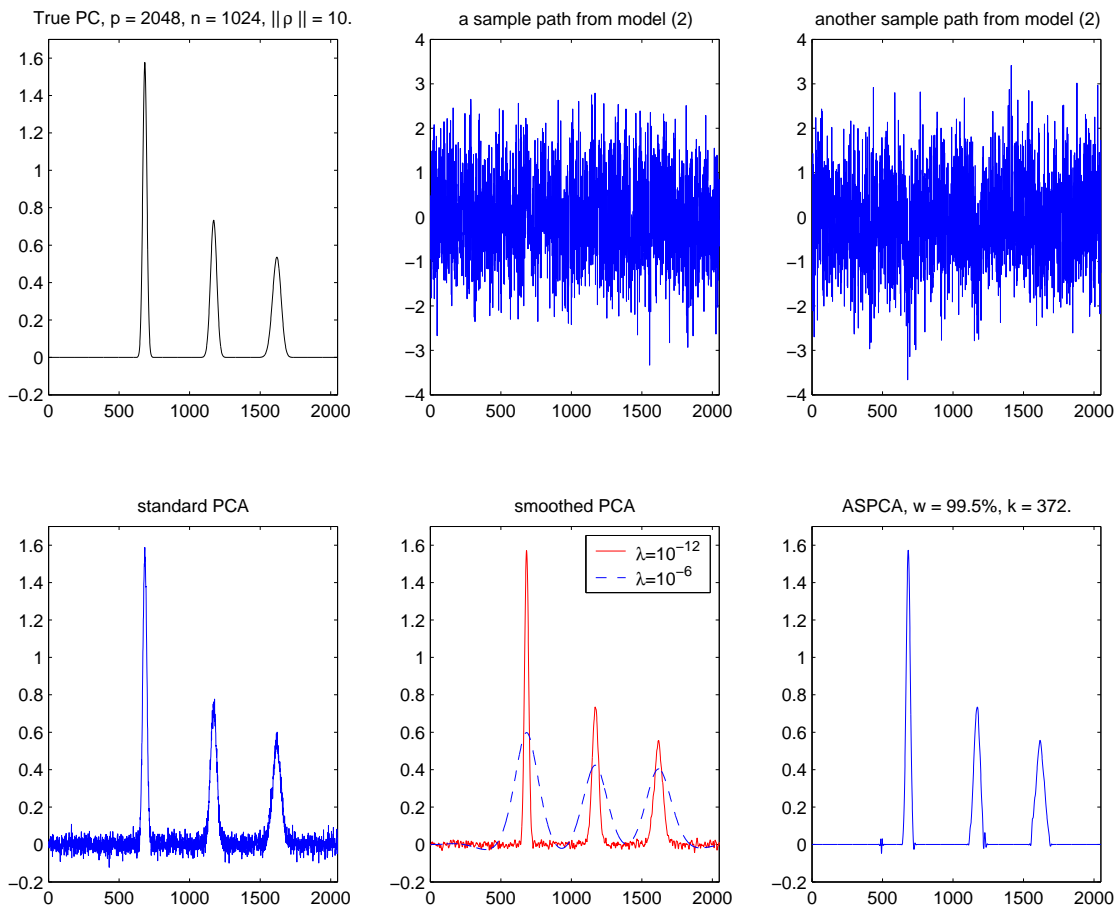}}
\caption{ \small True 
  principal component, the ``3-peak'' curve. Panel (a): the single
  component $\rho_l = f(l/n)$ where $f(t) = C \bigl\{0.7 B(1500, 3000)
  + 0.5 B(1200, 900) + 0.5 B(600, 160) \bigr\}$ and $B(a,b)(t) =
  [\Gamma(a+b)/(\Gamma(a) \Gamma(b))] t^{a-1} (1-t)^{b-1}$ denotes the
  Beta density on $[0,1]$.  
  Panels (b,c): Two sample paths drawn from model (2) with $\sigma = 1$.
  $n= 1024$ replications in total, $ p = 2048$.
  \ \ \  (d): Sample
  principal component by standard PCA. (e): Sample principal component
  by smoothed PCA using $\lambda = 10^{-12}$ and $\lambda = 10^{-6}$.
  (f): Sample principal component by  sparse PCA with
  weighting function $w = 99.5\%$, $k = 372$. }
       \label{fig:betacurves}
     \end{center}
\end{figure*}

Panel (a) of Figure \ref{fig:betacurves} shows an example of $\rho$
with $p = 2048$ and the vector $\rho_l = f(l/n)$ where $f(t)$ is a
mixture of Beta densities on $[0,1]$, scaled so that $\| \rho \| = 
(\sum_1^p \rho_l^2)^{1/2} = 10.$
Panels (b) and (c) show two sample paths from model
(\ref{eq:singlecpt}): the random effect $v_i \rho$ is hard to discern
individual cases.
Panel (d) shows the result of standard PCA applied to $n=1024$
observations from (\ref{eq:singlecpt}) with $\sigma=1$.
The effect of the noise remains clearly visible in the estimated
principal eigenvector.

For functional data of this type, a regularized approach to PCA has
been proposed by \citet{risi91} and \citet{silv96}, see 
also \citet{rasi97} and references therein.
While smoothing can be incorporated in various ways, we
illustrate the method discussed also in \citet[Ch. 7]{rasi97}, which replaces
(\ref{eq:pcacrit}) with 
\begin{equation}
  \label{eq:fpacrit}
    \mbox{Var} \, (\xi^T x_i ) / [ \| \xi \|^2 + \lambda \| D^2 \xi
    \|^2 ],
\end{equation}
where $D^2 \xi$ is the $(p-2) \times 1$ vector of second differences
of $\xi$ and $\lambda \in (0,\infty)$ is the regularization parameter.

Panel (e) shows the estimated first principal component vector found
by maximizing (\ref{eq:fpacrit}) with $\lambda = 10^{-12}$ and
$\lambda = 10^{-6}$ respectively. Neither is really satisfactory as an
estimate: the first recovers the original peak heights, but fails
fully to suppress the remaining baseline noise, while the second
grossly oversmooths the peaks in an effort to remove all trace of
noise.
Further investigation with other choices of $\lambda$ confirms the
impression already conveyed here: no single choice of $\lambda$
succeeds both in preserving peak heights and in removing baseline
noise.

Panel (f) shows the result of the adaptive sparse PCA algorithm to be
introduced below: evidently both goals are accomplished quite
satisfactorily in this example.

\section{The need to select subsets: (in)consistency of classical PCA}
\label{sec:select}

A basic element of our sparse PCA proposal is initial selection of a
relatively small subset of the initial $p$ variables before any PCA is
attempted. In this section, we formulate some (in)consistency results
that motivate this initial step.

Consider first the single component model (\ref{eq:singlecpt}). 
The presence of noise means that the sample covariance matrix
$S = n^{-1} \sum_{i=1}^n x_i x_i^T$ will typically have $\min (n,p)$
non-zero eigenvalues.
Let $\hat \rho$ be the unit eigenvector associated with the largest
sample eigenvalue---with probability one it is uniquely determined up
to sign.

One natural measure of the closeness of $\hat \rho$ to $\rho$ uses the
angle $\angle (\hat \rho, \rho)$ between the two vectors.
We decree that the signs of $\hat \rho$ and $\rho$ be taken so that
$\angle ( \hat \rho, \rho)$ lies in $[0, \pi/2]$.
It will be convenient to phrase the results in terms of an equivalent
distance measure
\begin{equation}
  \label{eq:distdef}
  \text{dist}(\hat \rho, \rho) 
  = \sin \angle ( \hat \rho, \rho ) 
  = \sqrt{1 - (\rho^T \hat \rho)^2}.
\end{equation}

For asymptotic results, we will assume that there is a sequence of
models (\ref{eq:singlecpt}) indexed by $n$. 
Thus, we allow $p(n)$ and $\rho(n)$ to depend by $n$, though the
dependence will usually not be shown explicitly.
[Of course $\sigma$ might also be allowed to vary with $n$, but for
simplicity it is assumed fixed.]

Our first interest is whether the estimate $\hat \rho$ is
consistent as $n \rightarrow \infty$. This turns out to depend
crucially on the limiting value
\begin{equation}
  \label{eq:clim}
  \lim_{n \rightarrow \infty} p(n)/n = c.
\end{equation}
We will also assume that
\begin{equation}
  \label{eq:rholim}
  \lim_{n \rightarrow \infty} \| \rho(n) \| = \varrho > 0.
\end{equation}
One setting in which this last assumption may be reasonable is when
$p(n)$ grows by adding finer scale wavelet coefficients of a fixed
function as $n$ increases.

\begin{theorem}
  \label{thm:upperbd}
Assume model (\ref{eq:singlecpt}), (\ref{eq:clim}) and
(\ref{eq:rholim}). Define
\begin{displaymath}
  \zeta(\tau;c) = \frac{4 \sqrt c}{\tau} \Bigl( 1 + \frac{2 + \sqrt
  c}{\tau} \Bigr).
\end{displaymath}
Then with probability one as $n \rightarrow \infty$,
\begin{equation}
  \label{eq:upperbd}
  \limsup_{n \rightarrow \infty} 
      \ \sin \angle ( \hat \rho, \rho ) \leq \zeta( \varrho/\sigma, c),
\end{equation}
so long as the right side is at most one.
\end{theorem}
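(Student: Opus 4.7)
The plan is to combine a matrix perturbation (sin-theta) argument with classical almost-sure limits for the spectra of sample covariance matrices. Writing $u = \rho/\varrho$, the population covariance $\Sigma = \rho\rho^T + \sigma^2 I$ has top eigenvector $u$ with eigenvalue $\varrho^2 + \sigma^2$, while all remaining eigenvalues equal $\sigma^2$, so the spectral gap is $\varrho^2 = \sigma^2 \tau^2$. Plugging (\ref{eq:singlecpt}) into $S = n^{-1} \sum x_i x_i^T$ yields the decomposition
\begin{equation*}
S \;=\; \bar v^2\,\rho\rho^T \;+\; \sigma(\rho\bar Z^T + \bar Z \rho^T) \;+\; \sigma^2 S_z,
\end{equation*}
with $\bar v^2 = n^{-1}\sum v_i^2$, $\bar Z = n^{-1}\sum v_i z_i$ and $S_z = n^{-1}\sum z_i z_i^T$. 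Setting $E = S - \Sigma$, the Davis--Kahan sin-theta theorem, combined with Weyl's inequality (which gives $\lambda_2(S) \le \sigma^2 + \|E\|_{\mathrm{op}}$), yields $\sin\angle(\hat\rho, u) \le \|E\|_{\mathrm{op}}/(\varrho^2 - \|E\|_{\mathrm{op}})$ whenever $\|E\|_{\mathrm{op}} < \varrho^2$.

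The second step is to bound each of the three summands of $E = (\bar v^2 - 1)\rho\rho^T + \sigma(\rho\bar Z^T + \bar Z\rho^T) + \sigma^2(S_z - I)$ almost surely as $n \to \infty$. The strong law gives $\bar v^2 \to 1$, so the first summand vanishes in operator norm. For the rank-two middle term, conditionally on $v = (v_i)$ the vector $n\bar Z$ is $N(0, \|v\|^2 I_p)$, hence $\|\bar Z\|^2$ has the distribution of $\|v\|^2 \chi^2_p / n^2$ and converges a.s.\ to $c$; this gives $\|\sigma(\rho\bar Z^T + \bar Z\rho^T)\|_{\mathrm{op}} \le 2\sigma\varrho \|\bar Z\| \to 2\sigma\varrho\sqrt c$. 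For the pure-noise piece, the Bai--Yin theorem provides $\lambda_{\max}(S_z) \to (1+\sqrt c)^2$ and $\lambda_{\min}(S_z) \to (1-\sqrt c)_+^2$ almost surely, so $\|S_z - I\|_{\mathrm{op}} \to 2\sqrt c + c$. Assembling the pieces,
\begin{equation*}
\limsup_n \|E\|_{\mathrm{op}} \;\le\; 2\sigma\varrho\sqrt c + \sigma^2 \sqrt c\,(2 + \sqrt c) \;=\; \sigma^2 \sqrt c\,(2\tau + 2 + \sqrt c) \quad \text{a.s.}
\end{equation*}
A short algebraic check shows $\|E\|_{\mathrm{op}}/\varrho^2 \le \zeta(\tau, c)/2$. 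Hence, whenever the stated $\zeta(\tau, c) \le 1$, one has $\|E\|_{\mathrm{op}}/\varrho^2 \le 1/2$, and the Davis--Kahan bound simplifies to $\sin\theta \le 2\|E\|_{\mathrm{op}}/\varrho^2 \le \zeta(\tau, c)$, as required.

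The main obstacle is the Bai--Yin bound for $\|S_z - I\|_{\mathrm{op}}$: in the regime $p(n)/n \to c > 0$ the pure-noise sample covariance does not concentrate near the identity, and the almost-sure control of its extreme eigenvalues is a nontrivial external input (to be cited, or reproduced via the trace moment method). Once that is in hand, the remaining work is the straightforward combination of three elementary a.s.\ limits with the Davis--Kahan inequality, and the almost-sure statement in the theorem follows by intersecting the countably many probability-one events on which these limits hold.
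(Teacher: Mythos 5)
Your proposal is correct and follows essentially the same route as the paper: the same decomposition $S - ES = (\bar v^2 - 1)\rho\rho^T + \sigma(\rho \bar Z^T + \bar Z \rho^T) + \sigma^2(n^{-1}ZZ^T - I)$, the same almost-sure inputs (SLLN for $\bar v^2$, the conditional-Gaussian representation of $Zv$ giving $\|\bar Z\|^2 \to c$, and the Geman/Silverstein extreme-eigenvalue limits for the Wishart part), and a sin-theta type perturbation bound with spectral gap $\varrho^2$. The only differences are minor: the paper applies the Golub--Van Loan eigenvector perturbation bound (constant $4/\delta$, after subtracting $\sigma^2 I$) together with the exact rank-two eigenvalue formula for the cross term, whereas you use Davis--Kahan plus Weyl and the cruder bound $2\sigma\varrho\|\bar Z\|$; the factor of $2$ lost on the cross term is absorbed by the sharper effective constant in your Davis--Kahan step (since $\zeta \le 1$ forces $\|E\| \le \varrho^2/2$), so your final bound still lands below $\zeta(\tau;c)$.
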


For the proof, see Appendix \ref{sec:upper-bounds:-proof}. 
The bound $\zeta(\tau;c)$ is decreasing in the ``signal-to-noise'' ratio $\tau
= \varrho/\sigma$ and increasing in the dimension-to-sample size ratio
$c = \lim p/n$.
It approaches $0$ as $c \rightarrow 0$, and in particular it follows
that $\hat \rho$ is consistent if $p/n \rightarrow 0$.

The proof is based on an almost sure bound for
eigenvectors of perturbed symmetric matrices. It appears to give 
the correct order of convergence: in the case $p/n \rightarrow 0$, we have
\begin{displaymath}
  \zeta(\tau,p/n) \sim c(\tau) \sqrt{ p/n},
\end{displaymath}
with $c(\tau) = 4 \tau^{-1} + 8 \tau^{-2}$, 
and examination of the proof shows that in fact
\begin{displaymath}
  \angle ( \hat \rho, \rho) = O_p( \sqrt{ p/n} )
\end{displaymath}
which is consistent with the $n^{-1/2}$ convergence rate that is
typical when $p$ is fixed.

However if $c > 0$, the upper bound (\ref{eq:upperbd}) is strictly
positive.  And it turns out that $\hat \rho$ must be an inconsistent
estimate in this setting:

\begin{theorem}
  \label{th:lowerbd}
 Assume model (\ref{eq:singlecpt}), (\ref{eq:clim}) and
(\ref{eq:rholim}). If $p/n \rightarrow c > 0$, then $\hat \rho$ is
inconsistent:
\begin{displaymath}
  \liminf_{n \rightarrow \infty} E \angle ( \hat \rho, \rho ) > 0.
\end{displaymath}
\end{theorem}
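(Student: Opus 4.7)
My plan is to reduce to the canonical spiked Wishart model by rotational invariance and then invoke sharp random-matrix asymptotics for the top eigenvector. Because the noise $z_i \sim N_p(0,I)$ is rotationally invariant in law, I may rotate coordinates so that $\rho(n) = \varrho(n) e_1$. Then the $x_i$ are i.i.d.\ $N_p(0,\Sigma)$ with the rank-one spiked covariance $\Sigma = \varrho^2 e_1 e_1^T + \sigma^2 I$, and $\cos^2 \angle(\hat\rho,\rho) = (\hat\rho^T e_1)^2 =: \hat r_n^2$; the problem reduces to bounding $\hat r_n^2$ away from $1$.

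The core step is to show that $\hat r_n^2$ has an almost sure limit strictly less than $1$ when $c > 0$. Writing
$$S = \sigma^2 W_n + \sigma\varrho\bigl(e_1 u_n^T + u_n e_1^T\bigr) + \varrho^2\,\overline{v^2}\,e_1 e_1^T,$$
with $W_n = n^{-1}\sum z_i z_i^T$ and $u_n = n^{-1}\sum v_i z_i$, one sees that in the regime $p/n \to c > 0$ none of the three pieces is negligible: by the Bai--Yin / Mar\v{c}enko--Pastur theorem $\|W_n\|\to(1+\sqrt c)^2$ almost surely, and $E\|u_n\|^2 = p/n \to c$. The spike therefore sits atop a persistently rough noise background whose spectral radius stays bounded away from $1$. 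A Schur-complement / secular-equation analysis of $S$ in the $(e_1,e_1^\perp)$ splitting, combined with Stieltjes-transform control of the noise resolvent on $e_1^\perp$, yields the explicit a.s.\ limit
$$\hat r_n^2 \longrightarrow r^2(\tau,c) := \begin{cases} \dfrac{1 - c/\tau^4}{1 + c/\tau^2} & \text{if } \tau^2 > \sqrt c,\\[0.3em] 0 & \text{if } \tau^2 \le \sqrt c, \end{cases}$$
where $\tau = \varrho/\sigma$. Both branches are strictly less than $1$ whenever $c > 0$, so $s := \sqrt{1 - r^2(\tau,c)} > 0$.

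Given this, the conclusion is routine. We have $\sin\angle(\hat\rho,\rho) = \sqrt{1 - \hat r_n^2} \to s > 0$ almost surely, and since $\angle \in [0,\pi/2]$ the sine is uniformly bounded by $1$, so bounded convergence gives $E\sin\angle(\hat\rho,\rho) \to s$. The elementary inequality $\angle \ge \sin\angle$ on $[0,\pi/2]$ then yields $\liminf_n E\,\angle(\hat\rho,\rho) \ge s > 0$, which is the claimed inconsistency.

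The main obstacle is the random-matrix asymptotic in the second paragraph. The bulk result $\|W_n\|\to (1+\sqrt c)^2$ is classical, but obtaining the precise almost sure limit of $\hat r_n^2$ — together with the supercritical/subcritical dichotomy at the threshold $\tau^2 = \sqrt c$ — requires nontrivial resolvent estimates for the Wishart noise restricted to $e_1^\perp$. A weaker fallback that would still suffice for the qualitative inconsistency statement is to combine $\|W_n - I\| \not\to 0$ with a Davis--Kahan sin-theta bound to show that $\hat r_n^2$ cannot concentrate at $1$; but the clean positive lower bound on $\liminf E\angle$ is cleanest through the precise random-matrix calculation sketched above.
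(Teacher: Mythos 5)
Your overall route---rotate to the spiked-covariance form $\Sigma=\varrho^2 e_1e_1^T+\sigma^2 I$, obtain an a.s.\ limit $\hat r_n^2\to r^2(\tau,c)<1$, then pass to expectations via $\sin\angle\le\angle$ and bounded convergence---is coherent, and the limiting overlap formula you quote (with the phase transition at $\tau^2=\sqrt c$) is in fact the correct one. But as written the argument has a genuine gap: the entire content of the theorem is concentrated in the assertion that $\hat r_n^2$ converges a.s.\ to $r^2(\tau,c)$, and you do not establish it. The ``Schur-complement / secular-equation analysis combined with Stieltjes-transform control of the noise resolvent on $e_1^\perp$'' is precisely the nontrivial random-matrix work that would have to be carried out (or cited from a specific source); knowing only $\|W_n\|\to(1+\sqrt c)^2$ and $E\|u_n\|^2\to c$, as you note, does not by itself yield the eigenvector limit. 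Moreover, your proposed fallback does not close the gap: a Davis--Kahan $\sin\theta$ theorem bounds the angle between eigenvectors \emph{from above} in terms of a perturbation norm, so it cannot be used to show that $\hat r_n^2$ stays away from $1$; the fact that $\|W_n-I\|\not\to 0$ gives no lower bound on $\angle(\hat\rho,\rho)$ without further argument. So the qualitative inconsistency is exactly the part left unproved.

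It is worth contrasting this with the paper's proof, which deliberately avoids any precise spectral limit. There one writes $S=D+B$ with $B=\rho u^T+u\rho^T$, $u=\sigma n^{-1}Zv$, introduces the sign-flipped matrix $S_-=D-B$ whose top eigenvector $\hat\rho_-$ has the same distribution as $\hat\rho$, and shows that \emph{if} $\hat\rho$ were within angle $\delta$ of $\rho$, then $B\hat\rho$ is nearly orthogonal to $\hat\rho$ with $\|B\hat\rho\|\gtrsim\tfrac12\sigma\sqrt c$, forcing $\angle(\hat\rho,S_-\hat\rho)$, hence (via Lemma~\ref{lem:perturb}) $\angle(\hat\rho,\hat\rho_-)$, to be bounded below; the symmetry $\hat\rho\stackrel{\mathcal D}{=}\hat\rho_-$ and the triangle inequality for angles then give the lower bound on $E\angle(\hat\rho,\rho)$. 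The only probabilistic inputs are $\|u\|\to\sigma\sqrt c$, $u^T\rho\to 0$, and Geman's bound on $\lambda_{\max}(n^{-1}ZZ^T)$---far weaker than the overlap limit you invoke. If you want to pursue your route, the clean way to make it rigorous is to cite an established eigenvector-overlap theorem for spiked sample covariance matrices; your version buys a sharp quantitative limit (and the threshold dichotomy), whereas the paper's symmetry argument buys an elementary, self-contained proof of inconsistency alone.
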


In short, $\hat \rho$ is a consistent estimate of $\rho$ if and only
if $p = o(n)$.
The noise does not average out if there are too many dimensions $p$
relative to sample size $n$. A heuristic explanation for this
phenomenon is given just before the proof in Appendix
\ref{sec:lower-bounds:-proof}.

\bigskip

The inconsistency criterion extends to a considerably more general
\textit{multi-component} model.
Assume that we have $n$ curves $x_i$, observed at $p$ time
points. Viewed as $p$ dimensional column vectors, this model assumes that
\begin{equation}
x_i = \mu + \sum_{j = 1}^m v_i^j \rho^j + \sigma z_i, \quad
i = 1,\cdots,n.
\label{eq:mcm}
\end{equation}
Here $\mu$ is the mean function, which is assumed known, and hence is taken
to be zero. We make the following assumptions:

(a) The $\rho^j, j = 1, ... ,m \leq p$ are unknown, mutually
orthogonal principal components, with norms $\rho_j(n)= \| \rho^j \|$
\begin{equation}
  \label{eq:strict}
  \| \rho^1 \| > \| \rho^2 \| \geq \cdots \geq \|\rho^m \|.
\end{equation}

(b) The multipliers $v_i^j \sim N(0,1)$ are all independent
over $j = 1, \ldots, m$ and $i = 1, \ldots ,m$.

(c) The noise vectors $z_i \sim N_p(0,I)$ are independent
among themselves and also of the random effects $\{ v_i^j \}$.

For asymptotics, we add

(d) We assume that $p(n), m(n)$ and $\{ \rho^j(n), j = 1,
\ldots, m\}$ are functions of $n$, though this will generally not be
shown explicitly. We assume that the norms of the $n^{th}$ principal
components converge as sequences in $\ell_1(\mathbb{N})$:
\begin{equation}
  \label{eq:rhocge}
  \begin{split}
   & \varrho(n) = ( \| \rho^1(n) \|, \ldots , \| \rho^j(n) \|, \ldots) \\
     & \rightarrow \varrho = (\varrho_1, \ldots, \varrho_j, \ldots).
  \end{split}
\end{equation}
We write $\varrho_+$ for the limiting $\ell_1$ norm:
\begin{displaymath}
  \varrho_+ = \sum_j \varrho_j.
\end{displaymath}

\textit{Remark on Notation.} \ The index $j$, which runs over
principal components, will be written as a superscript on \textit{vectors}
$v^j, \rho^j$ and $u^j$(defined in Appendix), but as a subscript on
\textit{scalars} such as $\varrho_j(n)$ and $\varrho_j$.

\medskip

We continue to focus on the estimation of the principal eigenvector
$\rho^1$, and establish a more general version of the two preceding theorems.

\begin{theorem}
  \label{th:multi-cpt}
 Assume model (\ref{eq:mcm}) together with conditions (a)-(d). If $p/n
 \rightarrow c$, then
 \begin{displaymath}
   \limsup_{n \rightarrow \infty} \sin \angle ( \hat \rho^1, \rho^1) 
   \leq \frac{4 \sigma \sqrt c}{\varrho_1^2 - \varrho_2^2}[ \rho_+ +
   (2 + \sqrt c) \sigma]
 \end{displaymath}
so long as the right side is at most, say, 4/5. 

If $c > 0$, then
\begin{displaymath}
  \liminf_{n \rightarrow \infty} E \angle ( \hat \rho^1, \rho^1) > 0.
\end{displaymath}
\end{theorem}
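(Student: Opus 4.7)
The approach is to follow the template of Theorems 1 and 2, treating the extra components $\rho^2,\ldots,\rho^m$ as additional signal terms that both enlarge the perturbation and shrink the relevant eigenvalue gap. Collecting the data, write $X = PV^T + \sigma Z$ where $P = [\rho^1 \cdots \rho^m]$ is $p\times m$, $V = (v_i^j)$ is $n\times m$, and $Z$ is $p\times n$ with i.i.d.\ $N(0,1)$ entries, so that
\[
S \;=\; \tfrac{1}{n}PV^TVP^T \;+\; \tfrac{\sigma}{n}\bigl(PV^TZ^T + ZVP^T\bigr) \;+\; \tfrac{\sigma^2}{n}ZZ^T.
\]
The natural reference matrix $\Sigma_0 = PP^T + \sigma^2 I$ has top eigenvector $\rho^1/\|\rho^1\|$ with top two eigenvalues $\varrho_1(n)^2 + \sigma^2$ and $\varrho_2(n)^2 + \sigma^2$; this gives the Davis--Kahan eigengap $\varrho_1^2 - \varrho_2^2$ appearing in the denominator of the claimed bound.

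For the upper bound, I would apply the $\sin\theta$ inequality used in Theorem 1 with $\Sigma_0$ unperturbed and $E_n := S - \Sigma_0$ as the perturbation, and bound $\|E_n\|_{\mathrm{op}}$ in three pieces. First, $\|\tfrac{1}{n}P(V^TV - nI)P^T\|_{\mathrm{op}} \to 0$ almost surely by the law of large numbers on the fixed-dimensional Gram matrix $V^TV/n$. Second, for the signal--noise cross term, expand $PV^T = \sum_j \rho^j v^{jT}$ and bound each rank-one piece using the conditional Gaussianity of $v^{jT}Z^T$ given $v^j$ together with the Bai--Yin bound $\|Z\|_{\mathrm{op}}/\sqrt{n} \to 1+\sqrt c$; the triangle inequality then yields $\limsup \|\tfrac{\sigma}{n}PV^TZ^T\|_{\mathrm{op}} \le \sigma\sqrt{c}\,\varrho_+$ almost surely. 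Third, $\limsup\|\sigma^2(ZZ^T/n - I)\|_{\mathrm{op}} \le \sigma^2\sqrt{c}(2+\sqrt{c})$ by Bai--Yin. Summing these pieces and dividing by the gap $\varrho_1^2 - \varrho_2^2$ reproduces the stated bound up to absolute constants, with the $4/5$ ceiling needed to keep the $\sin\theta$ estimate non-vacuous.

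The main obstacle is the cross term: one needs the dependence on the ``extra'' signals to appear as the $\ell^1$ sum $\varrho_+$, not as $\|P\|_{\mathrm{op}} = \varrho_1$ (too weak in the nuisance directions) or as $(\sum_j\varrho_j^2)^{1/2}$ (also too weak under the $\ell^1$ hypothesis). The rank-one decomposition $PV^TZ^T = \sum_j \rho^j(v^{jT}Z^T)$ followed by the triangle inequality is the right move, and the $\ell^1$-convergence assumption (\ref{eq:rhocge}) guarantees $\varrho_+(n) \to \varrho_+ < \infty$, so the bound does not blow up even if $m = m(n)$ grows with $n$.

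For the inconsistency half when $c > 0$, the proof of Theorem 2 should generalize with only cosmetic changes. One clean route is to condition on the values $\{v_i^j : j\ge 2\}$ and observe that $y_i := x_i - \sum_{j\ge 2} v_i^j\rho^j = v_i^1\rho^1 + \sigma z_i$ is exactly a single-component model, so an oracle estimator based on $\{y_i\}$ is inconsistent for $\rho^1$ by Theorem 2. The remaining step is to transfer the inconsistency to the actual $\hat\rho^1$ built from $\{x_i\}$: the extra components only add further principal directions that compete with $\rho^1$ for the top sample eigenvector, and by the same heuristic that underlies Theorem 2 -- when $p/n \to c > 0$, the Marchenko--Pastur bulk for the noise has top edge $\sigma^2(1+\sqrt{c})^2 > \sigma^2$, which competes with the signal eigenvalues -- the component of $\hat\rho^1$ in $(\mathrm{span}\,\rho^1)^\perp$ remains bounded below in probability. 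This portion I expect to be routine once Theorem 2 is in hand, with the detailed constants coming out of the same moment/trace computations used in Appendix \ref{sec:lower-bounds:-proof}.
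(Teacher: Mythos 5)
Your upper-bound strategy is the same as the paper's (decompose $S$ minus its signal part into the Gram-deviation term, the signal--noise cross terms, and the recentered Wishart term, then apply a perturbation/$\sin\theta$ bound with gap $\varrho_1^2-\varrho_2^2$), but two of your steps gloss over exactly the points where work is needed, because condition (d) allows $m=m(n)\to\infty$. First, $V^TV/n$ is \emph{not} fixed-dimensional, so "law of large numbers on the fixed-dimensional Gram matrix" does not apply; the paper instead bounds this term by $\max_{j,k}|v_s^{jk}|\,(\sum_j\|\rho^j\|)^2$ and uses the large-deviation bound (\ref{eq:maxcge}) on the entries of the scaled recentered $W_m(n,I)$ matrix, with $(\sum_j\|\rho^j\|)^2\to\varrho_+^2$ controlled by (\ref{eq:rhocge}). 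Second, for the cross term, passing from per-$j$ limits to $\limsup\sum_j\le\sum_j\limsup$ is not legitimate when the number of summands grows; the paper supplies the uniform a.s.\ bound $\sup_j\|u^j\|\le c_0$ of (\ref{eq:ujasbd}) and invokes Pratt's dominated convergence theorem to interchange limit and sum. Moreover, your bound loses the stated constant: treating $\rho^j v^{jT}Z^T$ and its transpose separately by the triangle inequality gives $2\sigma\sqrt c\,\varrho_+$, whereas the theorem's constant comes from the rank-two identity (\ref{eq:ranktwo}) applied to the symmetric matrix $B^j$, which yields $(1+|\tau_j|)\|\rho^j\|\,\|u^j\|$ with $\tau_j\to 0$ by (\ref{eq:taucge}) and $\|u^j\|\to\sigma\sqrt c$ by (\ref{eq:u2dist}); "up to absolute constants" does not prove the displayed inequality, which asserts a specific constant (and the $4/5$ ceiling is tied to the $\|E\|_2\le\delta/5$ hypothesis of the Golub--Van Loan perturbation bound with that constant).

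For the inconsistency half, your reduction has a genuine logical gap. Conditioning on $\{v_i^j: j\ge 2\}$ and subtracting $\sum_{j\ge2}v_i^j\rho^j$ produces an \emph{oracle} data set $\{y_i\}$ that the statistician does not observe, and inconsistency of the oracle's eigenvector does not transfer to $\hat\rho^1$ computed from $\{x_i\}$: the step "the component of $\hat\rho^1$ in $(\mathrm{span}\,\rho^1)^\perp$ remains bounded below" is precisely what has to be proved, and no argument is offered. The route consistent with the paper is to rerun the symmetry argument of Appendix \ref{sec:lower-bounds:-proof} directly in the multicomponent model: absorb the extra signal terms into the matrix $D$, flip the sign of the cross term involving $v^1$ to form $S_-$, use equality in distribution of $S$ and $S_-$, the lower bound $\|B\hat\rho^1\|\gtrsim\sigma\sqrt c$, near-orthogonality of $B\hat\rho^1$ to $\hat\rho^1$, and Lemma \ref{lem:perturb}. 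The paper itself omits these details (deferring to \citet{lu02}), but your proposed oracle-comparison shortcut would not constitute a proof as written.
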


Thus, it continues to be true in the multicomponent model
that $\hat \rho^1$ is consistent if and
only if $p = o(n)$.



\section{The sparse PCA algorithm}
\label{sec:sparse-pca-algorithm}

The inconsistency results of Theorems \ref{th:lowerbd} and
\ref{th:multi-cpt} emphasize the importance of reducing the number of
variables before embarking on PCA, and motivate the sparse PCA
algorithm to be described in general terms here. Note that the
algorithm \textit{per se} does not require the specification of a
particular model, such as (\ref{eq:mcm}).

1. \textit{Select Basis.} Select a basis $\{ e_\nu \}$ for
   $\mathbb{R}^p$ and compute co-ordinates $(x_{i \nu})$ for each $x_i$ in
   this basis:
   \begin{displaymath}
     x_i(t) = \sum_\nu x_{i \nu} e_\nu (t), \qquad i = 1, \ldots, n.
   \end{displaymath}

[The wavelet basis is used in this paper, for reasons discussed in the
next subsection.]

2. \textit{Subset.} Calculate the sample variances $\hat \sigma_\nu^2
   = \widehat{Var} (x_{i \nu})$. 
Let $\hat I$ denote the set of indices $\nu$ corresponding to the largest
   $k$ variances.

[$k$ may be specified in advance, or chosen based on the data, see 
Section \ref{sec:adaptive-choice-k} below].

3. \textit{Reduced PCA.} Apply standard PCA to the reduced data set
   $\{ x_{i \nu}, \nu \in \hat I, i = 1, \ldots, n\}$ on the
   selected $k-$dimensional subset, obtaining eigenvectors $\hat
   \rho^j = ( \hat \rho^j_\nu), j = 1, \ldots, k$.

4. \textit{Thresholding.} Filter out noise in the estimated
   eigenvectors by hard thresholding
   \begin{displaymath}
     \hat \rho^{*j}_\nu = \eta_H( \hat \rho^j_\nu, \delta).
   \end{displaymath}
[Hard thresholding is given, as usual, by $\eta_H(x,\delta) = x I \{
|x| \geq \delta \}$. An alternative is soft thresholding
$\eta_S(x,\delta) = \text{sgn}(x) (|x|-\delta)_+$, but hard
thresholding has been used here because it preserves the magnitude of
retained signals.

The threshold $\delta$ can be chosen, for example, by trial and error,
or as $\delta = \hat \tau_j \sqrt{2 \log k}$ for some estimate $\hat
\tau_j.$ In this paper, estimate (\ref{eq:taui}) is used. Another
possibility is to set $\hat \tau_j = MAD \{ \hat \rho^j_\nu,
\nu = 1, \ldots, k \}/ 0.6745$. ]

5. \textit{Reconstruction.} Return to the original signal domain,
   setting 
   \begin{displaymath}
     \hat \rho_j (t) = \sum_\nu \hat \rho^{*j}_\nu e_\nu (t).
   \end{displaymath}

In the rest of this section, we amplify on and illustrate various
aspects of this algorithm. Given appropriate eigenvalue and
eigenvector routines, it is not difficult to code. For example,
\texttt{MATLAB}
files that produce most figures in this paper will soon be available at 
\url{www-stat.stanford.edu/~imj/} -- to exploit wavelet bases, they make use of
the open-source library \texttt{WaveLab} available at
\url{www-stat.stanford.edu/~wavelab/}.

\subsection{Sparsity and Choice of basis}
\label{sec:spars-choice-basis}

Suppose that in the basis $\{ e_\nu (t) \}$  a population
principal component $\rho(t)$ has coefficients $\{ \rho_\nu \}$:
\begin{displaymath}
  \rho(t) = \sum_{\nu=1}^p \rho_\nu e_\nu(t).
\end{displaymath}

It is desirable, both from the point of view of economy of
representation, as well as computational complexity, for the expansion
in basis $\{ e_\nu \}$ to be \textit{sparse}, i.e., most coefficients
$\rho_\nu$ are small or zero.

One way to formalize this is to require that the ordered coefficient
magnitudes decay at some algebraic rate. We say that $\rho$ is
contained in a weak $\ell_q$ ball of radius $C$, $\rho \in w
\ell_q(C),$ if $|\rho|_{(1)} \geq |\rho|_{(2)} \geq \ldots $ and 
\begin{displaymath}
  |\rho|_{(\nu)} \leq C \nu^{-1/q}, \qquad \nu = 1, 2, \ldots
\end{displaymath}

Wavelet bases typically provide sparse representations of
one-dimensional functions that are smooth or have isolated
singularities or transient features, such as in our ECG example. Here
is one such result. Expand $\rho$ in a nice wavelet basis $\{
\psi_{jk}(t) \}$ to obtain $\rho = \sum_{jk} \rho_{jk} \psi_{jk}(t)$
and then order coefficients by absolute magnitude, so that
$(\rho_\nu)$ is a re-ordering of the $|\rho_{jk}|$ in decreasing
order.
Then smoothness (as measured by membership in some Besov space
$B^\alpha_{p,q}$) implies sparsity in the sense that 
\begin{displaymath}
      \rho \in B_{p,q}^\alpha \quad \Rightarrow \quad 
      (\rho_\nu) \in w \ell_p, \qquad p = 2/(2 \alpha + 1).
\end{displaymath}
[for details, see  \cite{dono93} and \cite{john01}: in particular it
is assumed that $\alpha > (1/p - 1/2)_+$ and that the wavelet $\psi$
is sufficiently smooth.]

In this paper, we will assume that the basis $\{ e_\nu \}$ is fixed in
advance -- and it will generally be taken to be a wavelet basis. 
Extension of our results to incorporate basis selection
(e.g. from a library of orthonormal bases such as wavelet packets) is
a natural topic for further research.

\subsection{Adaptive choice of $k$}
\label{sec:adaptive-choice-k}

Here are two possibilities for adaptive choice of $\hat k = | \hat I
|$ from the data:

(a) choose co-ordinates with variance exceeding the estimated noise
level by a specified fraction $\alpha_n$:
\begin{displaymath}
  \hat I = \{ \nu ~:~ \hat \sigma_\nu^2 \geq \hat \sigma^2 (1 + \alpha_n) \}.
\end{displaymath}
This choice is considered further in Section \ref{sec:corr-select-prop}.

(b) As motivation, recall that we hope that the selected set 
of variables $\hat I$ is
both small in cardinality and also captures most of the variance of
the population principal components, in the sense that the ratio
\begin{displaymath}
  \sum_{\nu \in \hat I} \rho_\nu^2 \Big/ \sum_\nu \rho_\nu^2
\end{displaymath}
is close to one for the leading population principal components in $\{
\rho^1, \ldots, \rho^m \}$.
Now let $\chi_{(n),\alpha}^2$ denote the upper $\alpha-$percentile of the
  $\chi_{(n)}^2$ distribution -- if all co-ordinates were pure noise,
  one might expect $\hat \sigma_{(\nu)}^2$ to be close to $n^{-1} \hat
  \sigma^2 \chi_{(n), \nu/n}^2$. Define the excess over these
  percentiles by 
  \begin{displaymath}
    \hat \tau_{(\nu)}^2 = \max \{ \hat \sigma_{(\nu)}^2 - n^{-1} \hat
  \sigma^2 \chi_{(n), \nu/n}^2, 0 \},
  \end{displaymath}
and for a specified fraction $w(n)$, set
\begin{displaymath}
  \hat I = \{ \nu ~:~ \sum_{\nu=1}^{\hat k} \hat \tau_{(\nu)}^2 \geq
  w(n) \sum_{\nu} \hat \tau_{(\nu)}^2 \},
\end{displaymath}
where $\hat k$ is the smallest index $k$ for which the inequality holds.
This second method has been used for the figures in this paper,
typically with $w(n) = .995$.

\medskip

\textit{Estimation of $\sigma$.}
If the population principal components $\rho^j$ have a sparse
representation in basis $\{ e_\nu \}$, then we may expect that in most
co-ordinates $\nu$, $\{ x_{i\nu} \}$ will consist largely of noise.
This suggests a simple estimate of the noise level
on the assumption that the noise level is the same in all co-ordinates, namely
\begin{equation}
  \label{eq:sigma2}
  \hat \sigma^2 = \text{median} (\hat \sigma_\nu^2 ).
\end{equation}



\subsection{Computational complexity}
\label{sec:comp-complexity}

It is straightforward to estimate the cost of sparse PCA by examining
its main steps:







\begin{enumerate}
\item  This depends on the choice of basis. In the wavelet case 
no more than $O(n  p  \log p)$ operations are needed. 
\item  Sort the sample variances and select $\hat I$: $O (p \log p)$.
\item  Eigendecomposition for a $k \times k$ matrix: $O(k^3)$.
\item  Estimate $\hat{\sigma}^2$ and $\widehat{\|\rho\|^2}$: $O(p)$.
\item  Apply thresholding: $O(k)$.
\item  Reconstruct eigenvectors in the original sample space:
$O(k^2 p)$.
\end{enumerate}

\noindent Hence, the total cost of sparse PCA is 
$$O( n  p \log p + k^2 p).$$

Both standard and smoothed PCA need at least $O((p\wedge n)^3)$
operations. Therefore, if we can find a sparse basis such that $k/p
\rightarrow 0$, then under the assumption that $p/n \rightarrow c$ as
$n \rightarrow \infty$,the total cost of sparse PCA is $o(p^3)$.
We will see in examples to follow that the savings can be substantial.



\subsection{Simulated examples}
\label{sec:simulated-examples}

The two examples in this section are both
motivated by functional data with localized features.  

\begin{figure*}[htb]
     \begin{center}
       \leavevmode
        \centerline{\includegraphics[ width = .6\textwidth,angle =
    0]{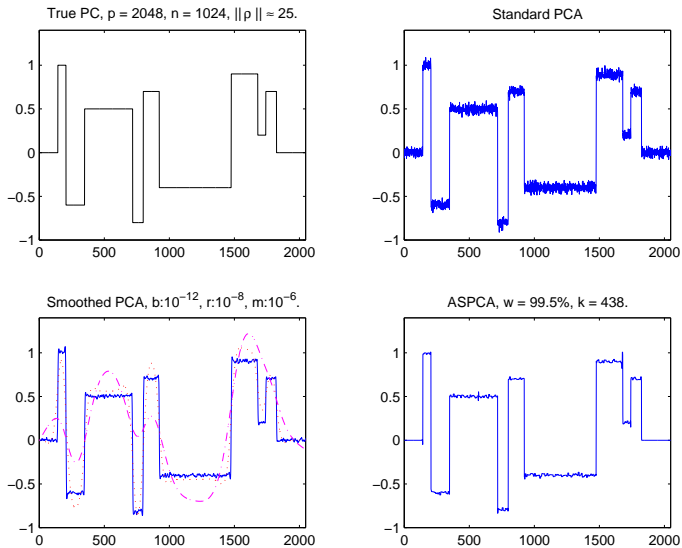}}
\caption{ \small Comparison of the sample principal components for a step
  function. \ \ \ \ \ (a)~True principal component $\rho_l = f(l/n)$, 
  the ``step'' function
  (b): Sample principal component by standard PCA. (c): Sample
  principal component by smoothed PCA using $\lambda = 10^{-12},
  10^{-8}$ and $10^{-6}$. (d): Sample principal component by 
  sparse PCA with weighting function $w = 99.5\%$, $k = 438$.  }
       \label{fig:step}
     \end{center}
\end{figure*}

The first is a three-peak principal component depicted in
Figure \ref{fig:betacurves}, and already discussed in Section
\ref{sec:introduction}. 
The second example, Figure \ref{fig:step}, has an underlying first principal
component composed of step functions.
For both examples, the dimension of data vectors is $p = 2048$, the
number of observations $n = 1024$, and the noise level $\sigma =1$.
However, the amplitudes of $\rho$  differ, with $\| \rho \| =10$
for the ``3-peak'' function and $\| \rho \| \approx 25$ for the ``step''
function.

Panels (d) and (b) in the two figures respectively
show the sample principal components
obtained by using standard PCA. While standard PCA does capture the
peaks and steps, it retains significant noise in the flat regions of
the function. Corresponding panels (e) and (c) 
show results from smooth PCA with the indicated
values of the smoothing parameter. Just as for the three peak curve
discussed earlier, in the case of the step function, none of the three
estimates simultaneously captures both jumps and flat regions well.



Panels (f) and (d) present the principal components obtained by sparse PCA.
Using method (b) of the previous section with $w = 99.5\%$,
the \textit{Subset} step selects $k = 372$ and 438
for the ``3-peak'' curve and ``step'' function, respectively. 
The sample principal component in Figure \ref{fig:betacurves}(d) is
clearly superior to the other sample p.c.s in Figure \ref{fig:betacurves}.
Although the principal component function in the step case appears to be
only slightly better than the solid blue smooth PCA estimate, we will
see later that its squared error is reduced by more than 90\%.

\bigskip

Table \ref{tab:tbd4} compares the accuracy of the three PCA algorithms, using
average squared error (ASE) defined as  
\begin{equation*}
{\rm ASE} = p^{-1} \| \hat \rho - \rho \|^2.
\end{equation*}
The average ASE over 50 iterations is shown.
The running time is the CPU time for a single iteration used by Matlab
on a MIPS R10000 195.0MHz server. 

Figure \ref{fig:aseplots} presents box plots of ASE for the 50
iterations. Sparse PCA gives the best result for the
``step'' curve. For the ``3-peak'' function, in only a few iterations
does sparse PCA generate larger error than smoothed PCA with a small
$\lambda = 10^{-12}$. On the average, ASE using sparse PCA is superior to
the other methods by a large margin.  Overall Table \ref{tab:tbd4} and
Figure \ref{fig:tbd3} show that sparse PCA leads to the most accurate
principal component while using much less CPU time than other PCA
algorithms.

\begin{center}
\begin{table*}
\begin{center}{\begin{tabular}{|c|c|c|c|c|}\hline
 & Standard & Smoothed & Smoothed & Sparse \\
 & PCA & $\lambda:10^{-12}$ &  $\lambda:10^{-6}$ & PCA \\ 
\hline
ASE (3-peak) & 9.681e-04 &  1.327e-04  &  3.627e-2 &  7.500e-05 \\
\hline
Time (3-peak) & $\sim$ 12min & $\sim$ 47 min & $\sim$ 43 min & 1 min 15 s \\
\hline
ASE (step) & 9.715e-04 &  3.174e-3  &  1.694e-2 &  1.947e-04 \\
\hline
Time (step) & $\sim$ 12min & $\sim$ 47 min & $\sim$ 46 min & 1 min 31 s \\
\hline
\end{tabular} \\
 \caption{Accuracy and efficiency comparison}\label{tab:tbd4}}
\end{center}
\end{table*}
\end{center}

\begin{figure*}[htb]
     \begin{center}
       \leavevmode
\centerline{\includegraphics[ width = .4\textwidth,angle =
    -90]{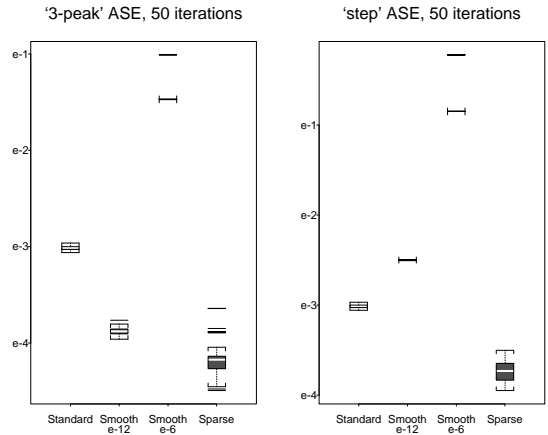}}
\caption{ \small Side-by side box-plots of ASE from 50 iterations using 
different algorithms. (a) For the ``3-peak'' function. (b) For the ``step''
  function.}
       \label{fig:aseplots}
     \end{center}
\end{figure*}

\bigskip \noindent
\textit{Remarks on the single component model.}

Anderson (1963) obtained the asymptotic distribution of 
$\sqrt{n}(\rho -\hat{\rho})$ for \textit{fixed} $p$;
in particular
\begin{equation}
{\rm Var} \{\sqrt{n}(\rho_\nu - \hat{\rho_\nu})\} 
\rightarrow (\|\rho\|^2 + \sigma^2)
\frac{\sigma^2}{\|\rho\|^4}(1-\rho_\nu^2),
\label{eq:vardiff}
\end{equation}
as $n \rightarrow \infty.$ 
For us, $p$ increases with $n$, but we will nevertheless use
(\ref{eq:vardiff}) as an heuristic basis for estimating the variance
$\hat \tau$ needed for thresholding.
Since the effect of thresholding is to remove noise in small
coefficients, setting $\rho_\nu$ to 0 in $(\ref{eq:vardiff})$ suggests
\begin{equation}
\hat{\tau}_\nu \approx \frac{1}{\sqrt{n}} \frac{\sigma
  \sqrt{\|\rho\|^2 + \sigma^2}}{\|\rho\|^2}.
\label{eq:taui}
\end{equation}

Neither $\|\rho\|^2$ and $\sigma^2$ in $(\ref{eq:taui})$ are known,
but they can be estimated by using the information contained in 
the sample covariance matrix $S$, much as in the discussion of
Section \ref{sec:adaptive-choice-k}. 
Indeed $S_\nu^2$, the $\nu$-th diagonal element of $S$, 
follows a scaled $\chi^2$
distribution, with expectation $\rho_\nu^2 + \sigma^2.$
If $\rho_\nu$ is a sparse representation of $\rho$, then most
coefficients will be small, suggesting the estimate (\ref{eq:sigma2}) for
$\sigma^2$.
In the single component model,
\begin{equation*}
||\rho||^2 = \sum_1^p \rho_\nu^2 = \sum_1^p  E(S_\nu^2) 
- \sigma^2,
\label{eq:rhonorm2}
\end{equation*}
which suggests as an estimate:
\begin{equation}
\widehat{||\rho||^2} = 
\sum_1^p \bigl\{ S_\nu^2 - {\rm median}(S_\nu^2) \bigr\}.
\label{eq:rhonormest}
\end{equation} 
Figure \ref{fig:tbd3} shows the histograms for these estimates of
$\|\rho\|$ and $\sigma$
based on 100 iterations for the ``3-peak'' curve and for the ``step''
function.

\begin{figure*}[htb]
     \begin{center}
       \leavevmode
       \centerline{\includegraphics[ width = .6\textwidth,angle =
          0]{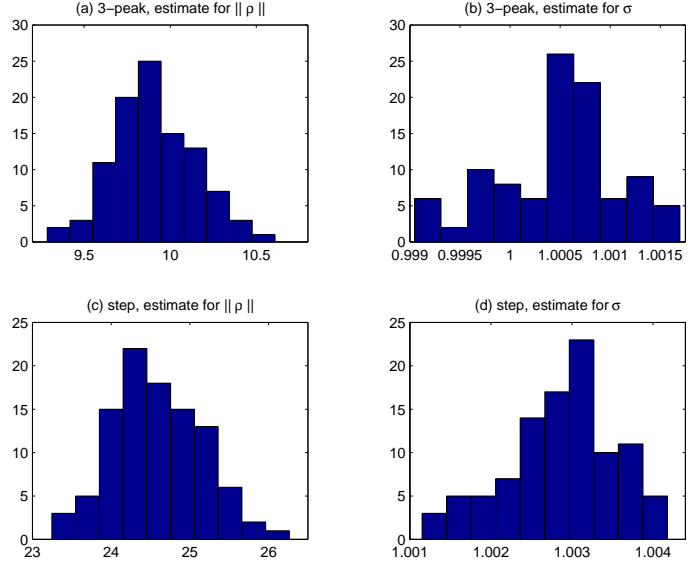}}
\caption{ \small Histograms from 100 iterations.  The ``3-peak''
  function, (a) estimate for $\|\rho\| =10$: mean = 9.91, SD = 0.24.  
(b): estimate for $\sigma = 1$: mean = 1.0005, SD = .0006.
The ``step''  function,  (c): estimate for $\|\rho\| = 24.82$:
mean = 24.58, SD = 0.56.
(d): estimate for $\sigma =1$: mean = 1.0029, SD = .0007.}
       \label{fig:tbd3}
     \end{center}
\end{figure*}

\subsection{Correct Selection Properties}
\label{sec:corr-select-prop}

A basic issue raised by the sparse PCA algorithm is whether the
selected subset $\hat I$ in fact correctly contains the largest
population variances, and only those. We formulate a result, based on
large deviations of $\chi^2$ variables, that provides some reassurance.

For this section, assume that the diagonal elements of the sample
covariance matrix $S = n^{-1} \sum_1^n x_i x_i^T$ have marginal
$\chi^2$ distributions, i.e.,
\begin{equation}
  \label{eq:chisqassn}
  \hat \sigma_\nu^2 = S_{\nu \nu} \sim \sigma_\nu^2 \chi_{(n)}^2 /n,
  \qquad \nu = 1, \ldots, p.
\end{equation}
We will not require any assumptions on the joint distribution of $\{
\hat \sigma_\nu^2 \}$.

Denote the ordered population coordinate variances by $\sigma_{(1)}^2
\geq \sigma_{(2)}^2 \geq \ldots$ and the ordered sample coordinate
variances by 
$\hat \sigma_{(1)}^2 \geq \hat \sigma_{(2)}^2 \geq \ldots $. 
A desirable property is that $\hat I$ should, for suitable
$\alpha_n$ small,

\noindent (i) include \textit{all} indices $l$ in 
\begin{align*}
  I_{in} & = \{ l: \sigma_l^2 \geq \sigma_{(k)}^2 (1 + \alpha_n) \}, 
\qquad \text{and} \\
\intertext{(ii) exclude \textit{all} indices $l$ in }
  I_{out} & = \{ l: \sigma_l^2 \leq \sigma_{(k)}^2 (1 - \alpha_n) \}.
\end{align*}
We will show that this in fact occurs if $\alpha_n = \gamma \sqrt{
n^{-1} \log n}$, for appropriate $\gamma > 0.$

We say that a \textit{false exclusion} (FE) occurs if any variable in $I_{in}$
is missed:
\begin{displaymath}
  FE = \bigcup_{l \in I_{in}} \{ \hat \sigma_l^2 < \hat \sigma_{(k)}^2 \},
\end{displaymath}
while a \textit{false inclusion} (FI) happens if any variable in $I_{out}$ is
spuriously selected:
\begin{displaymath}
  FI = \bigcup_{l \in I_{out}} \{ \hat \sigma_l^2 \geq \hat \sigma_{(k)}^2 \}.
\end{displaymath}

\begin{theorem}
  \label{th:fe-fi}
Under assumptions (\ref{eq:chisqassn}), the chance of an inclusion
error of either type in $\hat I_k$ having magnitude $\alpha_n = \gamma
n^{-1/2} (\log n)^{1/2}$ is polynomially small:
\begin{displaymath}
  P \{ FE \cup FI \} \leq 2 p k n^{-b(\gamma)} 
            + k n^{-(1- 2 \alpha_n) b(\gamma)},
\end{displaymath}
with $b(\gamma) = [ \gamma \sqrt 3/( 4 + 2 \sqrt 3)]^2.$
\end{theorem}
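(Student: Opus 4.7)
The plan is to reduce each selection error to a one-sided chi-square tail event, control it by a sharp Chernoff bound, and combine via union bounds.

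\textbf{Tail bounds.} Under (\ref{eq:chisqassn}), each $\hat\sigma_\nu^2/\sigma_\nu^2\sim\chi^2_{(n)}/n$, and standard Chernoff/Laurent--Massart inequalities give, for small $\delta>0$,
\[
    P\bigl(\hat\sigma_\nu^2/\sigma_\nu^2\geq 1+\delta\bigr)\vee P\bigl(\hat\sigma_\nu^2/\sigma_\nu^2\leq 1-\delta\bigr)\leq \exp\bigl(-\tfrac14 n\delta^2(1+o(1))\bigr).
\]
Taking $\delta=c\alpha_n$ for a carefully chosen constant $c$ produces a uniform tail probability of the form $n^{-b(\gamma)}$; the explicit value $b(\gamma)=[\gamma\sqrt{3}/(4+2\sqrt 3)]^2$ is forced by the feasibility constraint of the pairwise reduction in the next step and the exponent $c^2\gamma^2/4$ in the Chernoff bound.

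\textbf{Pigeonhole pairing for the $2pk\,n^{-b(\gamma)}$ term.} A false inclusion at $l\in I_{out}$ forces $l$ into the sample top-$k$; by pigeonhole, some $\nu_*$ in the population top-$k$ is displaced, so $\hat\sigma_{\nu_*}^2\leq \hat\sigma_l^2$ with $\sigma_{\nu_*}^2\geq \sigma_{(k)}^2$ and $\sigma_l^2\leq \sigma_{(k)}^2(1-\alpha_n)$. If both $\hat\sigma_{\nu_*}^2/\sigma_{\nu_*}^2>1-c\alpha_n$ and $\hat\sigma_l^2/\sigma_l^2<1+c\alpha_n$ held, a direct comparison would give $1-c\alpha_n<(1-\alpha_n)(1+c\alpha_n)$, which fails whenever $c(2-\alpha_n)<1$. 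Hence at least one of the two tail events occurs, and a union bound over the $\leq kp$ pairs $(\nu_*,l)$ and the two tail directions yields the $2pk\,n^{-b(\gamma)}$ contribution. A symmetric pair-based argument (pairing $l\in I_{in}$ not in the sample top-$k$ with an intruder $\nu'\notin$ population top-$k$ that is in the sample top-$k$) handles those false-exclusion configurations in which $\hat\sigma_{(k)}^2$ has been inflated.

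\textbf{Refined $FE$ bound for $k\,n^{-(1-2\alpha_n)b(\gamma)}$.} The remaining false-exclusion events are those in which $\hat\sigma_{(k)}^2$ is moderate; here I bound $\{\hat\sigma_l^2<\hat\sigma_{(k)}^2\}$ directly by comparing to the deterministic threshold $\sigma_{(k)}^2$. For $l\in I_{in}$, the separation $\sigma_l^2\geq \sigma_{(k)}^2(1+\alpha_n)$ forces a downward deviation of $\hat\sigma_l^2/\sigma_l^2$ below $(1+\alpha_n)^{-1}$; its squared size is $\alpha_n^2/(1+\alpha_n)^2\approx (1-2\alpha_n)\alpha_n^2$, so the lower-tail Chernoff bound produces an exponent $(1-2\alpha_n)b(\gamma)\log n$. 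A union bound over the at most $k$ elements of $I_{in}$ yields the second term.

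\textbf{Main obstacle.} Conceptually the argument is a careful union bound, but the bookkeeping is delicate. The non-routine work is (i) the pigeonhole reduction that sets up the pair of coordinates $(\nu_*,l)$ with a quantitative population gap, (ii) choosing the fraction $c$ of $\alpha_n$ so that the common rate $b(\gamma)=[\gamma\sqrt 3/(4+2\sqrt 3)]^2$ governs both the upper- and lower-tail events appearing in the pairwise step, and (iii) isolating the slowdown factor $(1-2\alpha_n)$, which arises as $(1+\alpha_n)^{-2}$ in the Chernoff exponent, to the single direct contribution from $I_{in}$ by routing every other $FE$ configuration through the pairing of Step 2.
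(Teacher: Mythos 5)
Your pigeonhole pairing (a displaced population top-$k$ index $\nu_*$ versus the intruder $l$) is a legitimate alternative to the paper's device, which instead compares every relevant $\hat\sigma_\nu^2$ to a single deterministic threshold $t=\sigma_k^2(1\mp\epsilon_n)$ and thereby reduces each error event to $k$ (or $p-1$) marginal tail events plus one more. However, your calibration step fails to deliver the stated constant $b(\gamma)$. You give the same deviation $\delta=c\alpha_n$ to both coordinates and need $c(2-\alpha_n)<1$, i.e.\ essentially $c\le 1/2$, for the pairwise contradiction. But the two chi-square tails are not symmetric at the non-asymptotic level the theorem requires: the paper's bounds (\ref{eq:chi2lo})--(\ref{eq:chi2hi}) have constants $1/4$ (lower) and $3/16$ (upper), so with $c\le 1/2$ the upper-tail event is only bounded by $n^{-3c^2\gamma^2/16}\ge n^{-3\gamma^2/64}$, and $3\gamma^2/64 < b(\gamma)=3\gamma^2/(4+2\sqrt3)^2$; your ``$\exp(-\tfrac14 n\delta^2(1+o(1)))$ for both tails'' is only an asymptotic statement and cannot produce the clean finite-$n$ inequality. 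The value $b(\gamma)$ arises precisely from an \emph{asymmetric} split of the gap $\alpha_n$: allocate the fraction $\sqrt3/(2+\sqrt3)$ of $\alpha_n$ to the lower tail and $2/(2+\sqrt3)$ to the upper tail, so that $\tfrac14 n\epsilon_n^2$ and $\tfrac{3}{16}n(\alpha_n-\epsilon_n)^2$ both equal $b(\gamma)\log n$ (up to the $(1\pm\alpha_n)^{-2}$ factors). Since these fractions sum to one, they are compatible with your feasibility constraint, so your pairing can be repaired this way; this balancing is exactly what the paper encodes in its choices $\epsilon_n=\sqrt3\,\alpha_n/(2+\sqrt3)$ for false inclusion and $\epsilon_n=2\alpha_n/(2+\sqrt3)$ for false exclusion.

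A second, smaller problem is your false-exclusion bookkeeping. The split into ``inflated'' versus ``moderate'' $\hat\sigma_{(k)}^2$ is never made precise, and the inflated branch requires its own union over up to $kp$ pairs with two tails each, while your false-inclusion pairing has already spent the entire $2pk\,n^{-b(\gamma)}$ budget; as sketched the main term comes out roughly $4pk\,n^{-b(\gamma)}$, not $2pk\,n^{-b(\gamma)}$. Also, in the moderate branch the downward deviation you exploit is the full $\alpha_n/(1+\alpha_n)$, whose Chernoff exponent is about $\tfrac{\gamma^2}{4}(1+\alpha_n)^{-2}\log n$; this dominates $(1-2\alpha_n)b(\gamma)\log n$, so your bound is valid but the claim that it ``produces'' that exponent misidentifies where the $(1-2\alpha_n)$ comes from. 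In the paper it appears because the single coordinate $l\in I_{in}$ is compared to the threshold $\sigma_k^2(1+\epsilon_n)$, giving a relative deviation $(\alpha_n-\epsilon_n)/(1+\alpha_n)$ and hence the factor $(1+\alpha_n)^{-2}\ge 1-2\alpha_n$ multiplying $b(\gamma)$.
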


For example, if $\gamma = 9$, then $b(\gamma) \doteq 4.36.$
As a numerical illustration based on (\ref{eq:p-bound}) below, 
if the subset size $k = 50$, while $p = n
= 1000$, then the chance of an inclusion error corresponding to a 25\%
difference in SDs (i.e. $\sqrt{1 + \alpha_n} = 1.25$) is below 5\%.
That reasonably large sample sizes are needed is a sad fact inherent
to variance estimation---as one of Tukey's `anti-hubrisines' puts it,
``it takes 300 observations to estimate a variance to one significant
digit of accuracy''. 

\subsection{Consistency}
\label{sec:consistency}

The sparse PCA algorithm is motivated by the idea that if the p.c.'s
have a sparse representation in basis $\{ e_\nu \}$, then selection of
an appropriate subset of variables should overcome the inconsistency
problem described by Theorem \ref{th:lowerbd}.

To show that such a hope is justified, we establish a consistency
result for sparse PCA. For simplicity, we consider the single
component model (\ref{eq:singlecpt}), and assume that $\sigma^2$ is
known---though this latter assumption could be removed by estimating
$\sigma^2 $ using (\ref{eq:sigma2}). 

To select the subset of variables $\hat I$, we use a version of rule
(a) from Section \ref{sec:adaptive-choice-k}:
\begin{equation}
  \label{eq:selectrule}
  \hat I = \{ \nu ~:~ \hat \sigma_\nu^2 \geq \sigma^2(1+\gamma_n) \},
\end{equation}
with $\gamma_n = \gamma (n^{-1} \log n)^{1/2}$ and $\gamma$ a
sufficiently large positive constant---for example $\gamma > \sqrt{12}$ 
would work for the proof.

We assme that the unknown principal components $\rho = \rho(n)$
satisfy a \textit{uniform sparsity condition}: for some positive
constants $q, C$,
\begin{equation}
  \label{eq:unif-sparsity}
  \rho(n) \in w \ell_q(C) \quad \text{uniformly in} \ n.
\end{equation}
Let $\hat \rho_I$ denote the principal eigenvector estimated by step (3)
of the sparse PCA algorithm (thresholding is not considered here).

\begin{theorem}
  \label{th:consistency}
Assume that the single component model (\ref{eq:singlecpt}) holds,
with $p/n \rightarrow c > 0$ and $\| \rho(n) \| \rightarrow \varrho >
0$. For each $n$, assume that $\rho(n)$ satisfies the uniform sparsity
condition (\ref{eq:unif-sparsity}).

Then the estimated principal eigenvector $\hat \rho_I$ obtained by subset
selection rule (\ref{eq:selectrule}) is consistent:
\begin{displaymath}
  \angle( \hat \rho_I, \rho) \stackrel{a.s.}{\rightarrow} 0.
\end{displaymath}
\end{theorem}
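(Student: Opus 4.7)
The plan is to decompose the proof into three steps: (i) use large-deviation bounds to pin down the selected subset $\hat I$ between deterministic envelopes, (ii) use the weak-$\ell_q$ sparsity to bound both $|\hat I|$ and the tail energy $\|\rho\|^2 - \|\rho_{\hat I}\|^2$, and (iii) apply the consistency bound of Theorem~\ref{thm:upperbd} to the reduced model on the random subset $\hat I$. Throughout, write $\rho_{\hat I}$ for the $p$-vector obtained from $\rho$ by zeroing out coordinates outside $\hat I$.

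First, I would control the set $\hat I$ via large deviations. For $\rho_\nu=0$, $\hat\sigma_\nu^2 \sim \sigma^2\chi_{(n)}^2/n$, and the standard one-sided bound $P(\chi_{(n)}^2/n \ge 1+t) \le \exp(-nt^2/8)$ (for small $t$) gives, with $\gamma_n = \gamma\sqrt{\log n/n}$ and $\gamma>\sqrt{12}$, a per-coordinate false-inclusion probability of at most $n^{-\gamma^2/8}$. Union-bounding over $p = O(n)$ coordinates and summing in $n$ yields a summable tail, so by Borel–Cantelli, almost surely for all large $n$ no pure-noise coordinate is in $\hat I$. A symmetric argument shows that every coordinate with $\rho_\nu^2 \ge 2\sigma^2\gamma_n$ is a.s.\ eventually included. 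Thus, a.s.\ for all large $n$, $I_n^- \subseteq \hat I \subseteq I_n^+$, where $I_n^\pm = \{\nu : \rho_\nu^2 \ge c^\pm \sigma^2\gamma_n\}$ for constants $0<c^-<c^+$ that depend on $\gamma$.

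Next, I would translate these envelopes into sparsity bookkeeping using (\ref{eq:unif-sparsity}). Since $|\rho|_{(\nu)} \le C\nu^{-1/q}$, the number of coordinates with $\rho_\nu^2 \ge c^-\sigma^2\gamma_n$ is at most $(C^2/(c^-\sigma^2\gamma_n))^{q/2} = O(\gamma_n^{-q/2}) = O((n/\log n)^{q/4})$, so $|\hat I|/n \to 0$ (and in fact $= o(n^{q/4})$). Similarly, for $q<2$ the tail norm satisfies
\begin{equation*}
  \|\rho\|^2 - \|\rho_{\hat I}\|^2 \ \le\ \sum_{\nu: \rho_\nu^2 < c^+ \sigma^2 \gamma_n} \rho_\nu^2 \ \le\ C^2 \sum_{\nu > \nu^\ast} \nu^{-2/q} \ = \ O\bigl(\gamma_n^{1-q/2}\bigr) \ \to\ 0,
\end{equation*}
so $\|\rho_{\hat I}\| \to \varrho>0$. (The case $q\ge 2$ is inconsistent with $\|\rho(n)\|\to\varrho<\infty$ under a fixed constant $C$, so we may assume $q<2$.)

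Finally, on $\hat I$ the data obey the single-component model $x_{i,\hat I} = v_i \rho_{\hat I} + \sigma z_{i,\hat I}$ with $|\hat I|/n \to 0$ and $\|\rho_{\hat I}\|\to\varrho$; the bound of Theorem~\ref{thm:upperbd} then yields $\sin\angle(\hat\rho_I, \rho_{\hat I}/\|\rho_{\hat I}\|) \to 0$ a.s. Combining with Step 2 through
\begin{equation*}
 \cos\angle(\hat\rho_I,\rho) \ = \ \cos\angle(\hat\rho_I,\rho_{\hat I}) \cdot \|\rho_{\hat I}\|/\|\rho\| \ \to\ 1,
\end{equation*}
gives $\angle(\hat\rho_I,\rho)\to 0$ a.s., as claimed. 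The principal obstacle is the last step: Theorem~\ref{thm:upperbd} was stated for a deterministic dimension sequence, whereas $\hat I$ is data-dependent and the restricted noise $z_{i,\hat I}$ is not independent of the selection event. The cleanest remedy is to sandwich $|\hat I| \le k_n$ by the deterministic envelope $k_n = \lceil (C^2/(c^-\sigma^2\gamma_n))^{q/2}\rceil$ from Step 1, and then invoke a uniform Davis–Kahan argument: a union bound over the $\binom{p}{\le k_n}$ principal submatrices, combined with concentration for a $k\times k$ Wishart with $k=k_n$, controls $\|\hat S_{\hat I}-\Sigma_{\hat I}\|$ by $O(\sqrt{k_n\log p/n})=o(1)$, which, divided by the eigengap $\|\rho_{\hat I}\|^2\to\varrho^2$, yields the required bound on $\sin\angle(\hat\rho_I, \rho_{\hat I}/\|\rho_{\hat I}\|)$. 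Verifying this uniform submatrix bound is where the real work lies.
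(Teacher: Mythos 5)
Your steps (i) and (ii) — bracketing $\hat I$ between deterministic sets via $\chi^2$ large deviations and Borel--Cantelli, then using weak-$\ell_q$ sparsity to bound $|\hat I|$ and show $\|\rho - \rho_{\hat I}\| \to 0$ — are exactly the paper's first two steps, and your reduction $\cos\angle(\hat\rho_I,\rho) = \cos\angle(\hat\rho_I,\rho_{\hat I})\,\|\rho_{\hat I}\|/\|\rho\|$ is a valid way to combine them. The gap is precisely where you say "the real work lies": you cannot invoke Theorem \ref{thm:upperbd} on the selected coordinates because $\hat I$ is data-dependent, and your proposed fix (a union bound over all $\binom{p}{\le k_n}$ principal submatrices plus nonasymptotic Wishart operator-norm concentration, then Davis--Kahan) is left unverified and requires machinery the paper never needs. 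The missing idea is that the bracketing you already proved in step (i) closes this step almost for free: on the a.s.-eventual event $\hat I \subseteq I_n^+$, every noise quantity on $\hat I$ is dominated by the corresponding quantity on the \emph{deterministic} set $I_n^+$ — specifically $\|u_I\| \le \|u_{I^+}\|$ and $\lambda_{\max}(n^{-1}Z_I Z_I^T - I) \le \lambda_{\max}(n^{-1}Z_{I^+}Z_{I^+}^T - I)$, since a principal submatrix cannot have a larger top eigenvalue. Because $|I_n^+| = o(n)$, the limits (\ref{eq:u2dist}) and (\ref{eq:geman}) (applied with the fixed index set $I_n^+$) give $\|E_I\|_2 \to 0$ a.s.\ for the perturbation $E_I = v_s\rho_I\rho_I^T + \rho_I u_I^T + u_I\rho_I^T + \sigma^2(n^{-1}Z_IZ_I^T - I)$ of $S_I - \sigma^2 I = \rho_I\rho_I^T + E_I$, and the eigenvector perturbation bound (\ref{eq:rbound}) with gap $\|\rho_I\|^2 \to \varrho^2$ yields $\mathrm{dist}(\hat\rho_I,\rho_I) \to 0$ directly. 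No uniformity over subsets, and no new concentration inequality, is needed.

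Two smaller points in step (i). First, your constants do not quite deliver Borel--Cantelli: with the bound $\exp(-nt^2/8)$ and $\gamma^2 > 12$ the per-coordinate false-inclusion probability is only $n^{-\gamma^2/8} = n^{-3/2}$, and summing over $p \sim cn$ coordinates and then over $n$ gives a divergent series; you need either a larger $\gamma$ or the sharper upper-tail exponent $3n\epsilon^2/16$ of (\ref{eq:chi2hi}) (and (\ref{eq:chi2lo}) for the lower tail), which is how the paper gets exponents strictly above what the union over $p = O(n)$ coordinates and over $n$ requires. Second, the exclusion half of the bracketing must handle \emph{all} coordinates with $\rho_\nu^2$ below the $I_n^+$ threshold, not only the pure-noise coordinates with $\rho_\nu = 0$; the event $\{\hat\sigma_\nu^2 \ge \sigma^2(1+\gamma_n)\}$ for such a coordinate is the event $\{\bar M_n \ge (1+\gamma_n)/(1+a_-\gamma_n)\}$ with $\bar M_n \sim \chi^2_{(n)}/n$, which is where the constant $a_-$ in the definition of $I_n^+$ enters. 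These are quantitative repairs, not conceptual ones, but as written the almost-sure bracketing is not yet established.
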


The proof is given in Appendix \ref{sec:proof-theorem-consist}: it is
based on a correct selection property similar to Theorem
\ref{th:fe-fi}: combined with a modification of the consistency
argument for Theorem \ref{th:multi-cpt}. 
In fact, the proof shows that consistency holds even under the weaker
assumption $p = O(n^a)$, for arbitrary $a > 0$, so long as $\gamma =
\gamma(a)$ is set sufficiently large.


\subsection{ECG example}
\label{sec:ecg-example}

This section offers a brief illustration of sparse PCA as applied to
some ECG data kindly provided by Jeffrey Froning and Victor Froelicher
in the cardiology group at Palo Alto Veterans Affairs Hospital.  Beat
sequences -- typically about 60 cycles in length -- were obtained from
some 15 normal patients: we have selected two for the preliminary
illustrations here.


\medskip
\textit{Data Preprocessing.} \ 
Considerable preprocessing is routinely done on ECG signals before the
beat averages are produced for physician use. Here we describe certain
steps taken with our data, in collaboration with Jeff Froning,
preparatory to the PCA analysis.



The most important feature of an ECG signal is
the Q-R-S complex: the maximum occurs at the
R-wave, as depicted in Figure \ref{fig:ecgsamp}(b).
Therefore we define the length of one cycle as the gap between two
adjacent maxima of R-waves. 


\begin{figure*}[htb]
     \begin{center}
       \leavevmode
       \centerline{\includegraphics[ width = \textwidth,angle =
         0]{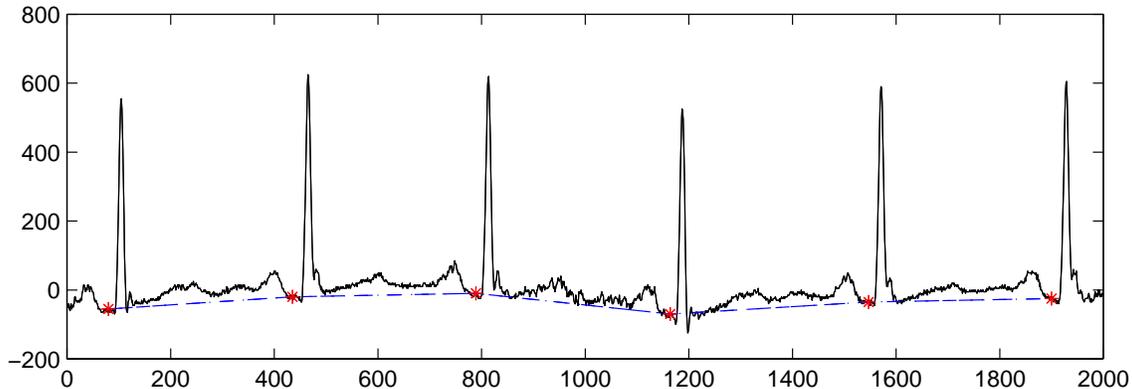}}
\caption{ \small ECG baseline wander.}
       \label{fig:tbd4}
     \end{center}
\end{figure*}

1. \textit{Baseline wander} is observed in many ECG data sets, c.f.
Figure \ref{fig:tbd4}. One common remedy for this problem is to deduct
a piecewise linear baseline from the signal, the linear segment
(dashed line) between two beats being determined from two adjacent
onset points.

The onset positions of R-waves are shown by asterisks.  Their
exact locations vary for different patients, and as Figure
\ref{fig:tbd4} shows, even for adjacent R-waves. The locations are
determined manually in this example.  To reduce the effect of noise,
the values of onset points are calculated by an average of 5 points
close to the onset position.

2. Since pulse rates vary even on short time scales,
the duration of each heart beat cycle may vary as well. We
use linear interpolation to equalize the duration of each cycle, and
for convenience in using wavelet software, 
discretize to $512 = 2^9$ sample points in each cycle.  

3. Finally, due to the
importance of the R-wave, the horizontal positions of the maxima are
the 150th position in each cycle.

4. Convert the ECG data vector into an $n \times p$ data matrix, where
$n$ is the number of observed cycles and $p=512$.
Each row of the matrix presents one heart 
beat cycle with the maxima of R-waves all aligned at the same
position.



\medskip




\begin{figure*}[htb]
     \begin{center}
       \leavevmode
       \centerline{\includegraphics[ width = .8\textwidth,angle =
        0]{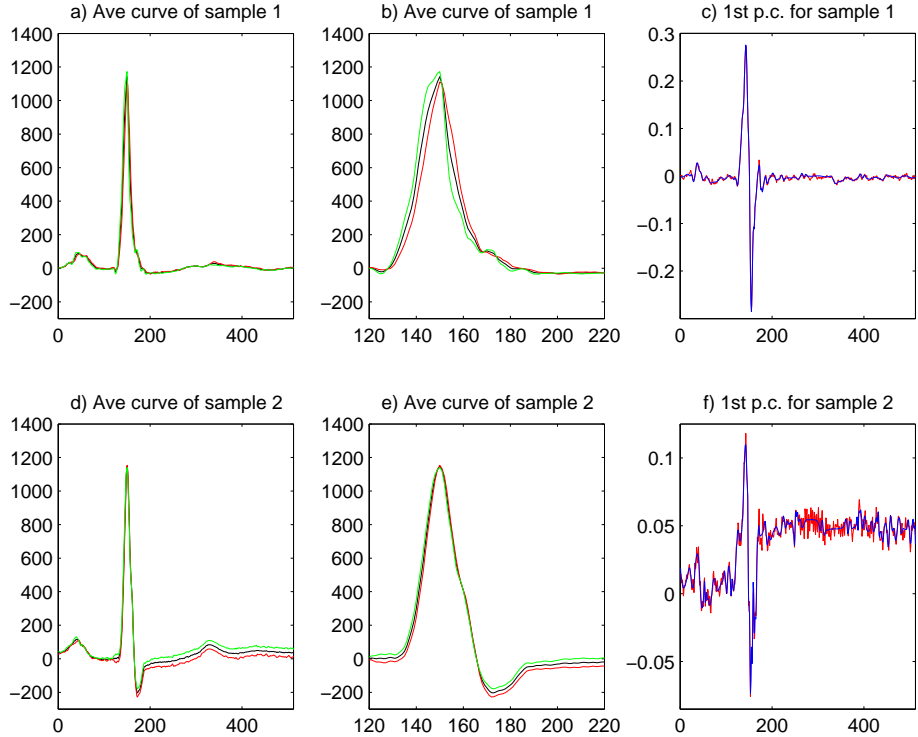}}
\caption{ \small ECG examples. (a): mean curve for ECG sample 1, $n = 66$,
 in blue, along with $\bar x + 2 \hat \rho$ (green) and $\bar x - 2
\hat \rho$ (red), with $\hat \rho$ being the estimated first principal
component from sparse PCA (see also (c)). 
(b) Magnified section of (a) over the range 120-220.
(c): First principal components for sample 1 from standard (red) and
sparse PCA (blue). 
(d)-- (f): corresponding plots for sample 2, $n = 61$.}
       \label{fig:tbd5}
     \end{center}
\end{figure*}

\textit{PCA analysis.}
Figure \ref{fig:tbd5} (a) and (d) shows the mean curves for two ECG
samples in blue. The number of observations $n$, i.e. number of heart beats
recorded, are 66 and 61, respectively. The first sample principal
components for these two sample sets are plotted in plots (c) and
(f), with
red curves from standard PCA and blue curves from sparse PCA. 
In both cases
there are two sharp peaks in the vicinity of the QRS complex. The
first peak occurs shortly before the 150th position, where all the maxima 
of R-waves are aligned,
and the second peak, which has an opposite sign, shortly after.


The standard PCA curve in Figure 6.7.(b, red) is less noisy
than that in panel (d, red), even allowing for the difference
in vertical scales. 
Using $(\ref{eq:sigma2})$, 
$$\hat{\sigma}_1^2 = 24.97 \quad {\rm and} \quad \hat{\sigma}_2^2 = 82.12.$$
while the magnitudes of the
two mean sample curves are very similar. 


The sparse PCA curves (blue) are smoother than the standard PCA ones
(red), especially in plot (d) where the signal to noise ratio is
lower. On the other hand, the red and blue curves match quite well at
the two main peaks. Sparse PCA has reduced noise in the sample
principal component in the baseline while keeping the main features.

There is a notable difference between the two estimated p.c.'s.  In
the first case, the p.c. is concentrated around the R-wave maximum,
and the effect is to accelerate or decelerate the rise (and fall) of
this peak from baseline in a given cycle.  This is more easily seen by
comparing plots of $\bar x + 2 \hat \rho$ (green) with $\bar x - 2
\hat \rho$ (red), shown over a magnified part of the cycle in panel
(b).  In the second case, the bulk of the energy of the p.c. is
concentrated in a level shift in the part of the cycle starting with
the ST segment.  This can be interpreted as beat to beat fluctuation
in baseline -- since each beat is anchored at $0$ at the onset point,
there is less fluctuation on the left side of the peak. This is
particularly evident in panel (e) -- there is again a slight
acceleration/deceleration in the rise to the R wave peak  -- less
pronounced in the first case, and also less evident in the fall.

Obvious questions raised by this illustrative example include the
nature of effects which may have been introduced by the preprocessing
steps, notably the baseline removal anchored at onset points and the
alignment of R-wave maxima. 
Clearly some conventions must be adopted to create rectangular data
matrices for p.c. analysis, but detailed analysis of these issues must
await future work.

Finally, sparse PCA uses less than 
10\% of the computing time than standard PCA.


\appendix
\section{Appendix}
\label{sec:appendix}


\subsection{Preliminaries}
\label{sec:preliminaries}

\textbf{Matrices.}
We first recall some pertinent matrix results. Define the $2-$norm of
a rectangular matrix by
\begin{equation}
  \label{eq:normdef}
\| A \|_2 = \sup \{ \| Ax \|_2 : \| x \|_2 = 1 \}.
\end{equation}
If $A$ is real and symmetric, then $\| A \|_2 = \lambda_{max}(A).$
If $A_{p \times p}$ is partitioned
\begin{displaymath}
  A = 
  \begin{pmatrix}
    a & b^T \\ b & C
  \end{pmatrix}
\end{displaymath}
where $b$ is $(p-1) \times 1$, then by setting $x = ( 1 \  0^T)^T$
in (\ref{eq:normdef}), one finds that
\begin{equation}
  \label{eq:elem}
  \| b \|_2 \leq \| A \|_2.
\end{equation}

The matrix $B = \rho u^T + u \rho^T$ has at most two non-zero eigenvalues,
given by
\begin{equation}
  \label{eq:ranktwo}
  \lambda = (\tau \pm 1) \| \rho \| \| u \|, \qquad 
  \tau = \rho^T u/ \| \rho \| \| u \|.
\end{equation}
Indeed, the identity $\det (I + AC) = \det (I + CA)$ for compatible
rectangular matrices $A$ and $C$ means that the
non-zero eigenvalues of 
\begin{displaymath}
  B = \begin{pmatrix} \rho & u  \end{pmatrix}
  \begin{pmatrix} u^T \\ \rho^T  \end{pmatrix}
\end{displaymath}
are the same as those of the $2 \times 2$ matrix
\begin{displaymath}
  B^* = \begin{pmatrix} u^T \\ \rho^T  \end{pmatrix}
         \begin{pmatrix} \rho & u  \end{pmatrix} 
      =  \begin{pmatrix}
        \tau \| \rho \| \| u \| &  \| u \|^2 \\
        \| \rho \|^2  &  \tau \| \rho \| \| u \|
      \end{pmatrix}
\end{displaymath}
from which (\ref{eq:ranktwo}) is immediate.

\bigskip

\textbf{Angles between vectors.} \ 
We recall and develop some elementary facts about angles between
vectors.
The angle between two non-zero vectors $\xi, \eta$ in $\mathbb{R}^p$
is defined as
\begin{equation}
  \label{eq:angle}
  \angle (\xi,\eta) = \frac{\cos^{-1} |\xi^T \eta |}{\| \xi \|_2 \|
  \eta \|_2} \in [0, \pi/2].
\end{equation}
Clearly $\angle (a \xi, b \eta) = \angle (\xi,\eta)$ 
for non-zero scalars $a$ and $b$; 
in fact $\angle ( \cdot, \cdot)$ is a metric on one-dimensional
subspaces of $\mathbb{R}^p$. 
If $\xi$ and $\eta$ are chosen to be unit vectors with 
$\xi^T \eta \geq 0$, then
\begin{equation}
  \label{eq:halfangle}
  \| \xi - \eta \|_2 = 2 \sin \hf \angle (\xi, \eta).
\end{equation}

The sine rule for plane triangles says that if $\xi, \eta$ are
non-zero and linearly independent vectors in $\mathbb{R}^p$, then
\begin{equation}
  \label{eq:sinerule}
  \sin \angle (\xi,\eta) = \frac{\| \xi - \eta \|}{\| \xi \|} \sin
  \angle (\xi - \eta ,\eta).  
\end{equation}

These remarks can be used to bound the angle between a vector $\eta$
and its image under a symmetric matrix $M$ in terms of the angle
between $\eta$ and any principal eigenvector of $M$.

\begin{lemma}
\label{lem:perturb}
  Let $\xi$ be a principal eigenvector of a non-zero symmetric matrix
  $M$. For any $\eta \neq 0$,
  \begin{displaymath}
    \angle ( \eta, M \eta) \leq 3 \angle (\eta, \xi). 
  \end{displaymath}
\end{lemma}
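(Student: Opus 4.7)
The plan is to route the bound through $\xi$ via the triangle inequality for the metric $\angle(\cdot,\cdot)$ on one-dimensional subspaces: write $\angle(\eta, M\eta) \leq \angle(\eta, \xi) + \angle(\xi, M\eta)$, where the first summand equals $\theta := \angle(\eta,\xi)$ by definition. The entire content of the argument is therefore to show that the second summand is at most $\theta$ as well.

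To control $\angle(\xi, M\eta)$, I would normalize $\|\eta\| = \|\xi\| = 1$, fix signs so that $\xi^T\eta = \cos\theta \geq 0$, and decompose $\eta = \cos\theta\cdot\xi + \sin\theta\cdot\xi^{\perp}$ with $\xi^{\perp}$ a unit vector orthogonal to $\xi$. Applying $M$ gives $M\eta = \lambda_1 \cos\theta \cdot \xi + \sin\theta \cdot M\xi^{\perp}$, and symmetry of $M$ yields $\xi^T M \xi^{\perp} = \lambda_1 \xi^T \xi^{\perp} = 0$, so the two summands on the right are orthogonal. The \emph{principal} eigenvector hypothesis then enters through $\|M\xi^{\perp}\| \leq \|M\|_2 = |\lambda_1|$. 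Consequently $\cos \angle(\xi, M\eta) = |\lambda_1|\cos\theta / \sqrt{\lambda_1^2 \cos^2\theta + \sin^2\theta\,\|M\xi^{\perp}\|^2} \geq \cos\theta$, which gives $\angle(\xi, M\eta) \leq \theta$ directly.

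Combining the two pieces yields $\angle(\eta, M\eta) \leq 2\theta$, which is in fact stronger than the stated $3\theta$; presumably the lemma is stated with slack because a loose constant suffices in the applications to come. The main subtlety I foresee is really the use of the principal-eigenvector hypothesis: the inequality $\|M\xi^{\perp}\| \leq |\lambda_1|$ demands $|\lambda_1| = \|M\|_2$, i.e.\ that $\lambda_1$ is largest \emph{in absolute value}, not merely largest in signed value. In the paper's setting $M$ is always positive semidefinite (a sample covariance), so the two notions coincide and the argument runs without amendment; the extra slack in the constant $3$ may also be intended to absorb degenerate edge cases such as $\cos\theta = 0$ or $M\eta$ near zero, where the intermediate angle $\angle(\xi, M\eta)$ is ill-defined.
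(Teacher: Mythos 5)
Your proof is correct, but it follows a genuinely different route from the paper's. You pivot through the eigenvector itself, writing $\angle(\eta,M\eta)\leq\angle(\eta,\xi)+\angle(\xi,M\eta)$ and then showing $\angle(\xi,M\eta)\leq\angle(\eta,\xi)$ by an explicit orthogonal decomposition $\eta=\cos\theta\,\xi+\sin\theta\,\xi^{\perp}$, using $\xi^{T}M\xi^{\perp}=0$ and $\|M\xi^{\perp}\|\leq\|M\|_2=|\lambda_1|$ to get $\cos\angle(\xi,M\eta)\geq\cos\theta$. The paper instead pivots through $M\xi$: it bounds $\angle(M\xi,M\eta)$ via the sine rule, $\sin\angle(M\xi,M\eta)\leq\|M\xi-M\eta\|/\|M\xi\|\leq\|\xi-\eta\|=2\sin\tfrac12\angle(\xi,\eta)$, invokes the elementary inequality $2\sin(\alpha/2)\leq\sin 2\alpha$ for $\alpha\leq\pi/4$ to conclude $\angle(M\xi,M\eta)\leq 2\angle(\xi,\eta)$, and adds $\angle(\eta,M\xi)=\angle(\eta,\xi)$ to reach the constant $3$. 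Both arguments use the principal-eigenvector hypothesis in the same way (the paper through $\|M\xi\|=\|M\|$, you through $\|M\xi^{\perp}\|\leq|\lambda_1|$), and both therefore read ``principal'' as largest in absolute value; your remark that this is harmless because the $M$ used later ($M=S_-$, itself a sample covariance) is positive semidefinite is accurate. What your route buys is a sharper constant ($2\theta$ rather than $3\theta$) and no need for the half-angle/calculus step; what the paper's route buys is a purely metric, Lipschitz-style argument that avoids choosing coordinates and handles the edge cases ($\theta=0$, $M\eta$ degenerate) without separate comment. Since the lemma is only used with a harmless multiplicative constant, either version serves the downstream inconsistency proof equally well.
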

\begin{proof}
  We may assume without loss of generality that $\| \xi \| =  \| \eta
  \| = 1$ and
  that $\xi^T \eta \geq 0$. Since $\xi$ is a principal eigenvector of
  a symmetric matrix, $ \| M \xi \| = \| M \|$. From the sine rule
  (\ref{eq:sinerule}),
  \begin{align*}
    \sin \angle (M \xi, M\eta) 
    & \leq \| M \xi - M \eta \|/ \| M \xi \| \\
    & \leq \| \xi - \eta \| 
    = 2 \sin \hf \angle (\xi, \eta),
  \end{align*}
where the final equality uses (\ref{eq:halfangle}). 
Some calculus shows that $2 \sin \alpha/2 \leq \sin 2 \alpha$ for $0
\leq \alpha \leq \pi/4$ and hence
\begin{equation}
  \label{eq:bound}
  \angle( M \xi, M \eta) \leq 2 \angle( \xi, \eta).
\end{equation}
From the triangle inequality on angles,
\begin{align*}
  \angle( \eta, M \eta) & \leq \angle( \eta, M \xi) + \angle( M \xi, M
  \eta) \\
  & \leq 3 \angle( \eta, \xi),
\end{align*}
using (\ref{eq:bound}) and the fact that $\xi$ is an eigenvector of $M$.
\end{proof}

\bigskip

\textbf{Perturbation bounds.} \ 
Suppose that a symmetric matrix $A_{p \times p}$ has unit eigenvector
$q_1$.  We wish to bound the effect of a \textit{symmetric}
perturbation $E_{p \times p}$ on $q_1$.
The following result (\citet[Thm 8.1.10]{govl96}, see also \cite{stsu90})
constructs a unit eigenvector $\hat q_1$ of $A+E$ and bounds its
distance from $q_1$ in terms of $\| E \|_2$.
Here, the distance between unit eigenvectors $q_1$ and $\hat q_1$ is defined
as at (\ref{eq:distdef}) and (\ref{eq:angle}).

Let $Q_{p \times p} = [q_1 \  Q_2]$ be an orthogonal matrix 
containing $q_1$ in the first column, and partition conformally
\begin{displaymath}
  Q^T A Q = \begin{pmatrix} \lambda & 0 \\ 0 & D_{22} \end{pmatrix},
  \quad
  Q^T E Q = \begin{pmatrix} \epsilon & e^T \\ e & E_{22} \end{pmatrix},
\end{displaymath}
where $D_{22}$ and $E_{22}$ are both $(p-1) \times (p-1)$.

Suppose that $\lambda$ is separated from the rest of the spectrum of
$A$; set
\begin{displaymath}
  \delta = \min_{\mu \in \lambda(D_{22})} | \lambda - \mu|.
\end{displaymath}
If $\| E \|_2 \leq \delta/5$, then there exists $r \in
\mathbb{R}^{p-1}$ satisfying
\begin{equation}
  \label{eq:rbound}
  \| r \|_2 \leq (4/\delta) \| e \|_2
\end{equation}
such that
\begin{displaymath}
  \hat q_1 = ( 1 + r^T r)^{-1/2} (q_1 + Q_2 r)
\end{displaymath}
is a unit eigenvector of $A+E$.
Moreover,
\begin{displaymath}
  \text{dist} (\hat q_1, q_1 ) \leq (4/\delta) \| e \|_2.
\end{displaymath}
Let us remark that since $\| e \|_2 \leq \| E \|_2$ by (\ref{eq:elem}), we have
$\| r \|_2 \leq 1$ and
\begin{equation}
    \label{eq:q1bd}
  q_1^T \hat q_1 = ( 1 + \| r \|_2^2 )^{-1/2} \geq 1/\sqrt 2.
\end{equation}

\bigskip

Suppose now that $q_1$ is the eigenvector of $A$ associated
with the \textit{principal} eigenvalue $\lambda_1(A)$. We verify that, under
the preceding conditions, $\hat q_1$ is also the principal eigenvector 
of $A+E$: i.e. if $(A+E) \hat q_1 =
\lambda^* \hat q_1$, then in fact $\lambda^* = \lambda_1(A+E)$.

To show this, we verify that $\lambda^* > \lambda_2(A+E)$.
Take inner products with $q_1$ in the eigenequation
for $\hat q_1$:
\begin{equation}
  \label{eq:lamstareq}
  \lambda^* q_1^T \hat q_1 = q_1^T A \hat q_1 + q_1^T E \hat q_1.
\end{equation}
Since $A$ is symmetric, $q_1^T A = \lambda_1(A) q_1^T$.
Trivially, we have  $q_1^T E \hat q_1 \geq - \| E \|_2$.
Combine these remarks with (\ref{eq:q1bd}) to get
\begin{displaymath}
  \lambda^* \geq \lambda_1(A) - \sqrt 2 \| E \|_2.
\end{displaymath}

Now $\delta = \lambda_1(A) - \lambda_2(A)$ and since from the minimax 
characterization of eigenvalues (e.g. \citet[p. 396]{govl96} or
\citet[p.218]{stsu90}), $\lambda_2(A+E) \leq \lambda_2(A) + \| E \|_2$, we
have 
\begin{align*}
  \lambda^* - \lambda_2(A+E) 
  & \geq \delta - (1 + \sqrt 2) \| E \|_2 \\
  & \geq \delta [ 1 - (1+ \sqrt 2)/5] > 0,
\end{align*}
which is the inequality we seek.


\bigskip

\textbf{Large Deviation Inequalities.}
If $\bar X = n^{-1} \sum_1^n X_i$ is the average of i.i.d. variates
with moment generating function $\exp \{\Lambda(\lambda) \} = E \exp
\{ \lambda X_1 \}$, then Cramer's theorem (see e.g. \citet[2.2.2 and
2.2.12]{deze93}) says that for $x > E X_1$,
\begin{equation}
  \label{eq:deze}
  P\{ \bar X > x \} \leq \exp \{ -n \Lambda^*(x) \},
\end{equation}
where the conjugate function $\Lambda^*(x) = \sup_{\lambda } 
\{ \lambda x - \Lambda(\lambda) \}$. The same bound holds for 
$P \{ \bar X < x \}$ when $x < EX_1$.

When applied to the $\chi_{(n)}^2$ distribution, with $X_1 = z_1^2$
and $z_1 \sim N(0,1)$,
the m.g.f. $\Lambda(\lambda) = - \hf \log (1 - 2 \lambda)$ and the
conjugate function $\Lambda^*(x) = \hf [ x - 1 - \log x]$. The bounds
\begin{displaymath}
  \log( 1 + \epsilon) \leq 
  \begin{cases}
    \epsilon - \epsilon^2 / 2   &  \ \  -1 < \epsilon < 0, \\
    \epsilon - 3 \epsilon^2 / 8 &  \ \  0 \leq \epsilon < \hf,
  \end{cases}
\end{displaymath}
(the latter following, e.g., from (47) in \cite{john00}) yield
\begin{alignat}{2}
  P \{ \chi_{(n)}^2 \leq n(1-\epsilon) \} & \leq \exp \{ -n \epsilon^2
  / 4 \}, \quad && 0 \leq \epsilon < 1,  \label{eq:chi2lo} \\
  P \{ \chi_{(n)}^2 \geq n(1+\epsilon) \} & \leq \exp \{ -3n \epsilon^2
  / 16 \}, \quad && 0 \leq \epsilon < \hf  \label{eq:chi2hi}.
\end{alignat}

We will use also a slightly sharper bound 
\begin{equation}
  \label{eq:sharper}
  P \{ \chi^2_{(n)} \geq n + t \sqrt{2n} \} \leq t^{-1} e^{-t^2/2}.
\end{equation}
valid for $n \geq 16$ and $0 \leq t \leq n^{1/6}$ \citep{john00}.

When applied to sums of variables $X_1 = z_1 z_2$,
with $z_1$ and $ z_2$ independent $N(0,1)$ variates, the m.g.f. 
$\Lambda(\lambda) = - \hf \log ( 1 - \lambda^2)$. 
With $\lambda_*(x) = [(1+4x^2)^{1/2} - 1]/(2x),$ the conjugate
function satisfies
\begin{displaymath}
    \Lambda^*(x) = \lambda_* x + \hf \log (1 - \lambda_*^2) 
   = (3/2) x^2 + O(x^4),
\end{displaymath}
as $x \rightarrow 0$. Hence, for $n$ large,
\begin{equation}
  \label{eq:3b2bd}
  P \{ \bar X > \sqrt{ b n^{-1} \log n} \} \leq C n^{-3b/2}.
\end{equation}


\textbf{Decomposition of sample covariance matrix.}
Now adopt the multicomponent model (\ref{eq:mcm}) along with its
assumptions (a) - (c).
The sample covariance matrix $S = n^{-1} \sum_{1}^{n} x_i x_i^{T}$ has
expectation $ES = R + \sigma^2 I_p$, where
\begin{equation}
  \label{eq:Rdecomp}
  R = \sum_{j=1}^m \rho^j \rho^{jT}.
\end{equation}
Now decompose $S$ according to (\ref{eq:mcm}).
Introduce $1 \times n$ row vectors $v^{jT} = (v_1^j \cdots v_n^j)$ and
collect the noise vectors into a matrix $Z_{p \times n} = [ z_1 \cdots
z_n]$. We then have
\begin{equation}
S - E S = \sum_{j, k = 1}^m A^{jk} + \sum_{j=1}^m B^{j}+C.
\label{eq:sabc}
\end{equation}
where the $p \times p$ matrices
\begin{equation}
  \label{eq:abc}
  \begin{split}
    A^{jk} &= \Bigl( n^{-1} \sum_{i=1}^n
    v_i^{j}v_i^k - \delta_{jk} \Bigr) 
    \rho^{j}\rho^{kT} = v_s^{jk} \rho^{j}\rho^{kT}, \\
    B^{j} &= \sigma n^{-1} \left(\rho^{j}v^{jT}Z^{T} + Z
      v^{j}\rho^{jT}\right),\\
    C &= \sigma^2 \bigl( n^{-1} ZZ^T - I_p \bigr).
  \end{split}
\end{equation}

\bigskip
\textbf{Some limit theorems.}
We turn to properties of the noise matrix $Z$ appearing in (\ref{eq:abc}). 
The cross products
matrix $Z Z^T$ has a standard $p$-dimensional Wishart $W_p(n,I)$
distribution with $n$ degrees of freedom and identity covariance
matrix, see  e.g. \citet[p82]{muir82}.  Thus the matrix $C = (c_{jk})$ in
(\ref{eq:sabc}) is simply a scaled and recentered Wishart matrix.
We state results below in terms of either $Z Z^T$ or $C$, depending on
the subsequent application.
Properties (b) and (c) especially play a key role in inconsistency
when $c>0$.

\bigskip

(a) If $p = O(n)$, then for any $b > 8$,
\begin{equation}
  \label{eq:maxcge}
  \max_{j, k} |c_{jk}| \leq \sigma \sqrt{\frac{b \log n}{n}} \qquad
  a.s.\qquad \mbox{as } \ n \rightarrow \infty.
\end{equation}
\begin{proof}
We may clearly take $\sigma = 1.$ 
An off-diagonal term in $n^{-1} Z Z^T = (c_{jk})$ has the distribution of 
an i.i.d. average $\bar X = n^{-1} \sum X_i$ where $X_1 = z_1 z_2$ is
the product of two independent standard normal variates. Thus
\begin{equation}
  \label{eq:psquared}
  P \{ \max_{j \neq k} |c_{jk}| > x \} \leq 2 p^2 P \{ \bar X > x \}.
\end{equation}
Now apply the large deviation bound (\ref{eq:3b2bd}) to the right
hand side. Since $p \sim cn$, the Borel-Cantelli lemma suffices to
establish (\ref{eq:maxcge}) for off-diagonal elements
for any $b>2$.

A diagonal term $c_{jj} +1$ in $n^{-1} Z Z^T$ has the $n^{-1}
\chi^2_{(n)}$ distribution. 
Setting $t = \sqrt{ \hf b \log n}$ in (\ref{eq:sharper}) yields
\begin{displaymath}
  P \{ c_{jj} > \sqrt{ b n^{-1} \log n} \} \leq
  \sqrt{2} ( b \log n)^{-1/2} n^{-b/4}.
\end{displaymath}
Since there are $p \sim cn$ diagonal terms, the conclusion
(\ref{eq:maxcge}) follows (again via Borel-Cantelli) so long as $b > 8$.
\end{proof}

\bigskip

(b) \citet{gema80} and \cite{silv85} respectively
established almost sure limits for the largest and
smallest eigenvalues of a $W_p(n,I)$ matrix as $p/n \rightarrow c \in
[0,\infty)$, from which follows:
\begin{equation}
  \label{eq:geman}
  \lambda_1(C), \lambda_p(C) \rightarrow \sigma^2 (c \pm 2 \sqrt{c}).
\end{equation}
[Although the results in the papers cited are for $c \in (0,\infty)$,
the results are easily extended to $c=0$ by simple coupling arguments.]


\bigskip

(c) Suppose in addition that $v$ is a $1 \times n$ vector with
independent $N(0,1)$ entries, which are also independent of $Z$.
Conditioned on $v$, the vector $Zv$ is distributed as $N_p(0, \| v
\|^2 I).$ Since $Z$ is independent of $v$, we conclude that
\begin{equation}
  \label{eq:repn}
  Z v \stackrel{\mathcal{D}}{=} \chi_{(n)} \chi_{(p)} U_p
\end{equation}
where $\chi_{(n)}^2$ and $\chi_{(p)}^2$ denote chi-square variables
and $U_p$ a vector uniform on the surface of the unit sphere $S^{p-1}$
in $\mathbb{R}^p$, and all three variables are independent.

Now let $u_{p \times 1} = \sigma n^{-1} Z v$. 
From (\ref{eq:repn}) we have 
\begin{equation}
  \label{eq:u2dist}
  \| u \|^2 \stackrel{\mathcal{D}}{=} \sigma^2 n^{-2} \chi_{(n)}^2
  \chi_{(p)}^2 \stackrel{a.s.}{\rightarrow} \sigma^2 c,
\end{equation}
as $p/n \rightarrow c \in [0,\infty)$.

If $\rho$ is any fixed vector in $\mathbb{R}^p$, it follows from
(\ref{eq:repn}) that
\begin{displaymath}
  \tau = \tau(p) =
     \rho^T u / \| \rho \| \| u \| \stackrel{\mathcal{D}}{=} U_{p,1},
\end{displaymath}
the distribution of the first component of $U_p$.
It is well known that $U_1^2 \sim \text{Beta}(1/2, (p-1)/2)$, so that
$E U_1^2 = p^{-1}$ and 
$\text{Var} U_1^2 \leq 2p^{-2}$.
From this it follows that 
\begin{equation}
  \label{eq:taucge}
  \tau(p) \stackrel{a.s.}{\rightarrow} 0, \qquad p \rightarrow \infty.
\end{equation}

\bigskip

(d) Let $u^j = \sigma n^{-1} Z v^{j}$ be the vectors appearing in the
definition of $B^j$ for $1 \leq j \leq m$. We will show that a.s. 
\begin{equation}
  \label{eq:ujasbd}
  \lim_{n \rightarrow \infty} \sup_j \| u^j \| < c_0
\end{equation}
(the constant $c_0 = 2 \sigma (1 + \sqrt c)$ would do).

\begin{proof}
Since 
$$\| u^j \|^2 = \sigma^2 n^{-2} v^{jT} Z^T Z v^{j} $$
we have 
\begin{equation}
   \label{eq:uj2}
   \sup_j \| u^j \|^2 \leq 
   \sigma^2 n^{-1} \lambda_{max}(Z Z^T) \sup_j \| v^j \|^2/n.
\end{equation}
From (\ref{eq:geman}), it follows that w.p. 1, ultimately
\begin{equation}
  \label{eq:lamas1}
\lambda_{max}(Z Z^T)/n \leq 2 (1 + \sqrt c)^2.
\end{equation}

The squared lengths $\| v^j \|^2$ follow independent $\chi_{(n)}^2$
laws. Since from (\ref{eq:deze}) there exists $c_1$ for which
$P \{ \chi_{(n)}^2 \geq 2n \} \leq e^{-c_1 n}$
for $n \geq n_0$, it follows that
\begin{displaymath}
  P \{ \sup \| v^j \|^2/n > 2 \} \leq p e^{-c_1 n}
\end{displaymath}
and so w.p. $1$ it is ultimately true that 
\begin{equation}
  \label{eq:vjas}
  \sup_j \| v^j \|^2/n \leq 2.
\end{equation}
Substituting (\ref{eq:lamas1}) and (\ref{eq:vjas}) into (\ref{eq:uj2}),
we recover (\ref{eq:ujasbd}).
\end{proof}

\bigskip
\bigskip

\subsection{Upper Bounds: Proof of Theorems \ref{thm:upperbd} and
  \ref{th:multi-cpt}}
\label{sec:upper-bounds:-proof}


Instead of working directly with the sample covariance matrix $S$, we
consider $S^* = S - \sigma^2 I_p.$
It is apparent that $S^*$ has the same eigenvectors as $S$.
We decompose $S^* = R + E$, where $R$ is given by (\ref{eq:Rdecomp})
and has spectrum
\begin{displaymath}
  \lambda(R) = \{ \| \rho^1\|^2, \cdots, \| \rho^m \|^2, 0 \}.
\end{displaymath}
The perturbation matrix 
$E = A + B + C,$
where $A$ and $B$ refer to the sums in (\ref{eq:sabc}).
\begin{proposition}
\label{prop:pertbd}
Assume that multicomponent model (\ref{eq:mcm}) holds, along with
assumptions (a) - (d).
For any $\epsilon >0$, if $p, n
\rightarrow \infty, p/n \rightarrow c$, then almost surely
\begin{equation}
\limsup \|E\|_2 \le \sigma \sqrt{c} \sum
\varrho_j + \sigma^2 (c + 2\sqrt{c}).
\label{eq:estar2}
\end{equation}
\end{proposition}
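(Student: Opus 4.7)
The strategy is to exploit the decomposition $E = A + B + C$ from (\ref{eq:sabc})--(\ref{eq:abc}) and bound each summand separately via the triangle inequality $\|E\|_2 \leq \|A\|_2 + \|B\|_2 + \|C\|_2$. For $C = \sigma^2(n^{-1}ZZ^T - I_p)$, the Geman--Silverstein extreme-eigenvalue limits recorded at (\ref{eq:geman}) immediately yield $\|C\|_2 \to \sigma^2(c + 2\sqrt{c})$ almost surely, matching the second term in the asserted bound.

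For $B = \sum_{j=1}^m B^{j}$ with $B^{j} = \rho^j u^{jT} + u^j \rho^{jT}$, the rank-two identity (\ref{eq:ranktwo}) gives $\|B^{j}\|_2 = (1 + |\tau_j|)\|\rho^j\|\|u^j\|$, where $\tau_j = \rho^{jT} u^j/(\|\rho^j\|\|u^j\|)$. Preliminary facts (c) and (d) supply, for each fixed $j$, the pointwise limits $\tau_j \to 0$ and $\|u^j\| \to \sigma\sqrt{c}$ almost surely, together with the uniform bound $\sup_j \|u^j\| \leq 2\sigma(1 + \sqrt c) =: c_0$ ultimately. I would combine these via truncation: given $\epsilon > 0$, the $\ell_1$ convergence of $\varrho(n)$ to $\varrho$ lets one fix $J$ so that $\sum_{j > J}\|\rho^j(n)\| < \epsilon$ for all large $n$. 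The finite head $\sum_{j \leq J} \|B^{j}\|_2$ then converges a.s. to $\sigma\sqrt{c}\sum_{j \leq J}\varrho_j$ term by term, while the tail is dominated a.s. by $2 c_0 \epsilon$. Letting first $n \to \infty$, then $J \to \infty$ and $\epsilon \to 0$, yields $\limsup \|B\|_2 \leq \sigma\sqrt{c}\,\varrho_+$.

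For $A = \sum_{j,k} v_s^{jk}\rho^j\rho^{kT}$, the mutual orthogonality $\rho^{jT}\rho^k = \delta_{jk}\|\rho^j\|^2$ simplifies $\operatorname{tr}(AA^T)$ to $\|A\|_F^2 = \sum_{j,k}(v_s^{jk})^2\|\rho^j\|^2\|\rho^k\|^2$. Since $E(v_s^{jk})^2 = O(1/n)$ uniformly in $j,k$, one gets $E\|A\|_F^2 = O\bigl(n^{-1}(\sum_j\|\rho^j\|^2)^2\bigr) = O(n^{-1}\varrho_+^4) \to 0$. Promoting this to an almost-sure statement is done by applying Borel--Cantelli to a sufficiently high moment, or alternatively by another head/tail truncation at an index $J$: the $J \times J$ head is controlled by a finite-dimensional LLN for $v_s^{jk}$, and the tail is absorbed by the smallness of $\sum_{j > J}\|\rho^j(n)\|^2$ guaranteed by the $\ell_1$ assumption. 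The outcome is $\|A\|_2 \leq \|A\|_F \to 0$ a.s., so $A$ contributes nothing to the lim sup.

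The main obstacle is the $A$ piece, since the component count $m(n)$ is not explicitly bounded by the stated hypotheses: the $\ell_1$ convergence of $\varrho(n)$ controls only the total $\ell_1$ mass of the component norms, not the cardinality of the support, so a Frobenius-type second-moment estimate must be paired with a truncation to avoid a direct dependence on $m(n)$. Once $\|A\|_2, \|B\|_2, \|C\|_2$ have each been controlled almost surely, the triangle inequality assembles them into the claimed bound (\ref{eq:estar2}).
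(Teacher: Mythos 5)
Your proposal is correct in outline and follows the paper's overall strategy (decompose $E=A+B+C$, Geman--Silverstein for $C$, the rank-two identity (\ref{eq:ranktwo}) for $B$), but it diverges in how the two non-trivial terms are finished. For $B$, the paper takes the same pointwise limits $\|B^j\|_2\le(1+|\tau_j|)\|\rho^j\|\|u^j\|\to\sigma\sqrt c\,\varrho_j$ and then sums them using Pratt's extension of dominated convergence, with the dominating sequence $2c_0\|\rho^j(n)\|$ supplied by (\ref{eq:ujasbd}) and (\ref{eq:rhocge}); your explicit head/tail truncation is just an unpacked version of the same argument and is fine. For $A$, the paper's route is shorter and purely almost sure: symmetrize to $\tilde A^{jk}$, use (\ref{eq:ranktwo}) with orthogonality to get $\|\sum_{j,k}A^{jk}\|_2\le\max_{j,k}|v_s^{jk}|\bigl(\sum_j\|\rho^j\|\bigr)^2$, and invoke the already-proved max-entry bound (\ref{eq:maxcge}) for the scaled, recentered $W_m(n,I)$ matrix, so the whole term vanishes a.s.\ in one line regardless of $m(n)$. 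Your Frobenius-norm second-moment calculation gives only convergence in probability, as you note; the upgrade via higher moments does work (e.g.\ Minkowski in $L^r$ with $E(v_s^{jk})^{2r}=O(n^{-r})$ gives $E\|A\|_F^{2r}=O(n^{-r})\varrho_+^{4r}$, summable for $r\ge 2$, and the $\ell_1$ bound on the weights makes the count $m(n)$ irrelevant), but your ``alternative'' truncation for $A$ is not self-contained as stated: the tail $\sum_{(j,k)\notin[J]^2}(v_s^{jk})^2\|\rho^j\|^2\|\rho^k\|^2$ involves unbounded random coefficients, so absorbing it into the small tail mass still requires a uniform a.s.\ control of $\sup_{j,k}|v_s^{jk}|$ --- which is precisely the paper's (\ref{eq:maxcge}), at which point the truncation is unnecessary. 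In short: same skeleton, equivalent treatment of $B$ and $C$; for $A$ the paper's max-entry Wishart bound is cleaner, while your moment route is a valid (if more laborious) substitute provided the Borel--Cantelli step is carried out with at least fourth moments.
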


\begin{proof}
We will obtain a bound in the form
\begin{displaymath}
  \| E \|_2 \leq E_n(\omega)
   = A_n(\omega) + B_n(\omega) + C_n(\omega),
\end{displaymath}
where the $A_n, B_n$ and $C_n$ will be given below. We have shown
explicitly the dependence on $\omega$ to emphasize that these
quantities are random. Finally we show that the a.s. limit of
$E_n(\omega)$ is the right side of (\ref{eq:estar2}). 

\textit{$A$ term.} \ Introduce symmetric matrices $2 \tilde A^{jk} =
  A^{jk} + A^{kj} = v^{jk}_s (\rho^j \rho^{kT} + \rho^k \rho^{jT}).$
  Since $\rho^j$ and $\rho^k$ are orthogonal, (\ref{eq:ranktwo})
  implies that
\begin{displaymath}
  \| \tilde A^{jk} \|_2 \leq |v_s^{jk} | \, \| \rho^j \| \, \| \rho^k \|,
\end{displaymath}
and so
\begin{displaymath}
  \| \sum_{j, k} A^{jk} \|_2 \leq
  \max_{j, k} |v_s^{jk} | \Bigl( \sum_j \| \rho^j \| \Bigr)^2 
  =: A_n(\omega).
\end{displaymath}
The $v_s^{jk}$ are  entries of a scaled and recentered $W_m(n,I)$
matrix, and so by (\ref{eq:maxcge}), the maximum converges almost
surely to $0$. Since $\sum_j \| \rho^j \|
\rightarrow \sum  \varrho_j < \infty,$ it follows that the
$A_n$-term converges to zero a.s.

\textit{$B$ term.} Applying (\ref{eq:ranktwo}) to the definition
(\ref{eq:abc}) of $B^j$, we have
\begin{displaymath}
  \| B^j \|_2 \leq X_n(j) = (1 + |\tau_j|) \| \rho^j \| \| u^j \| 
  \stackrel{a.s.}{\rightarrow} \sigma \sqrt c \varrho_j
\end{displaymath}
where $\tau_j = \rho^{jT} u^j / \| \rho^j \| \| u^j \|$ and 
$u^j = \sigma n^{-1} Z v^{j},$ and the convergence follows from
(\ref{eq:u2dist}) and (\ref{eq:taucge}).

Since $|\tau_j| \leq 1$ and using (\ref{eq:ujasbd}), we have a.s. that
for $n > n(\omega)$,
\begin{displaymath}
  X_n(j) \leq Y_n(j) := 2 c_0 \| \rho^j \| \rightarrow 2 c_0 \varrho_j.
\end{displaymath}
The norm convergence (\ref{eq:rhocge}) implies that $\sum_j Y_n(j)
\rightarrow 2 c_0 \sum \varrho_j$ and so it follows from the version 
of the dominated convergence theorem due to \cite{prat60} that 
\begin{displaymath}
  \sum \| B^j \|_2 \leq \sum_j X_n(j) =: B_n(\omega)
\stackrel{a.s.}{\rightarrow} \sigma \sqrt{c} \sum
  \varrho_j. 
\end{displaymath}


\textit{$C$ term.} \ Using (\ref{eq:geman}),
\begin{displaymath} 
   C_n(\omega) = 
  \| C \|_2 = \lambda_{max} (C) \stackrel{a.s.}{\rightarrow} \sigma^2 (c + 2 \sqrt c).
\end{displaymath}

\end{proof}

\bigskip
\bigskip

\textbf{Proof of Theorem \ref{th:multi-cpt}}
[Theorem \ref{thm:upperbd} is a special case.]
We apply the perturbation theorem with $A=R$ and $E=A+B+C$.
The separation between the principal eigenvalue of $R$ and the
remaining ones is 
\begin{displaymath}
  \delta_n = \rho_1^2(n) - \rho_2^2(n) \rightarrow \rho_1^2 - \rho_2^2,
\end{displaymath}
while from Proposition \ref{prop:pertbd} we have the bound
\begin{displaymath}
  \| E \|_2 \leq E_n(\omega) \stackrel{a.s.}{\rightarrow}
  \sigma \sqrt c \varrho_+ + \sigma^2(c + 2 \sqrt c).
\end{displaymath}
Consequently, if 
\begin{displaymath}
  4 \sigma \sqrt c \varrho_+ + \sigma^2(c + 2 \sqrt c) 
   \leq \varrho_1^2 - \varrho_2^2,
\end{displaymath}
then
\begin{displaymath}
  \limsup_{n \rightarrow \infty} \ \text{dist} (\hat \rho^1, \rho^1) 
  \leq \Omega(\rho,c;\sigma),
\end{displaymath}
where
\begin{displaymath}
  \Omega(\rho,c;\sigma) = 4 \sigma \sqrt c [ \varrho_+ + \sigma(\sqrt
  c + 2)]/ (\varrho_1^2 - \varrho_2^2).
\end{displaymath}

\bigskip
\bigskip


\subsection{Lower Bounds: Proof of Theorem \ref{th:lowerbd}}
\label{sec:lower-bounds:-proof}


We begin with a heuristic outline of the proof. 
We write $S$ in the form $ D + B$, introducing
\begin{displaymath}
  D = (1 + v_s) \rho \rho^T + \sigma^2 n^{-1} Z Z^T,
\end{displaymath}
while, as before, $B = \rho u^T + u \rho^T$ and $u = \sigma n^{-1} Z
v$. 

A symmetry trick plays a major role: write $S_- = D-B$
and let $\hat\rho_-$ be  the principal unit eigenvector for $S_-$.

The argument makes precise the following chain of remarks, which are
made plausible by reference to Figure \ref{fig:inconsist}.

\begin{figure}[htb]
     \begin{center}
       \leavevmode
        \centerline{\includegraphics[ width = .28\textwidth,angle =
    0]{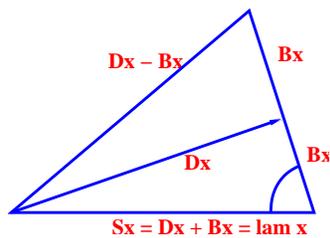}}
\caption{ \small \texttt{Needs caption}, with $x \leftarrow \hat \rho,
lam \leftarrow \hat \lambda$}
       \label{fig:inconsist}
     \end{center}
\end{figure}

(i) $B\hat \rho$ is nearly orthogonal to $D \hat \rho + B \hat \rho =
S \hat \rho = \hat \lambda \hat \rho$.

(ii) the side length $\| B \hat \rho \|$ is bounded away from zero,
when $c > 0$.

(iii) the angle between $\hat \rho$ and $S_- \hat \rho$ is ``large'',
i.e. bounded away from zero.

(iv) the angle between $\hat \rho$ and $\hat \rho_-$ is ``large'' 
[this follows from Lemma \ref{lem:perturb} applied to $M = S_-$.]

(v) and finally, the angle between $\hat \rho$ and $\rho$ must be
``large'', due to the equality in distribution of $\hat \rho$ and
$\hat \rho_-$. 

\medskip
Getting down to details, we will establish (i)-(iii) under the
assumption that $\hat \rho$ is close to $\rho$.
Specifically, we show that given $\delta > 0$ small, there exists 
$\alpha(\delta) = \alpha(\delta; \sigma,c)>0$ such that
w.p. $\rightarrow 1$,
\begin{displaymath}
  \angle(\hat\rho, \rho) \leq \delta \quad \Rightarrow \quad
  \angle(\hat\rho, S_- \hat\rho) \geq \alpha(\delta).
\end{displaymath}

Let $N_\delta = \{ x \in \mathbb{R}^p: \angle(x,\rho) \leq \delta \}$
be the (two-sided) cone of vectors making angle at most $\delta$ with
$x$.
We show that on $N_\delta$, both

(ii') $\| B x \|$ is bounded below (see (\ref{eq:bxlower})), and

(i') $Bx$ is nearly orthogonal to $x$ (see (\ref{eq:cosbd}). 

For convenience in this proof, we may take $\| \rho \| = 1$.
Write $x \in N_{\delta_1}$ in the form
\begin{equation}
  \label{eq:xcoord}
  x = (\cos \delta) \rho + (\sin \delta) \eta,
 \qquad \eta \perp \rho, \| \eta \| = 1, 0 \leq \delta \leq \delta_1.
\end{equation}
Since $B\rho = (u^T \rho) \rho + u$ and $B\eta = (u^T \eta) \rho$, we
find that
\begin{equation}
  \label{eq:bx}
  Bx = (\cos \delta)u + [(\cos \delta)(u^T\rho) + (\sin \delta)(u^T
  \eta)] \rho.
\end{equation}
Denote the second right side term by $r$: clearly $\| r \| \leq | u^T
\rho| + (\sin \delta) \| u \|$, and so, uniformly on $N_\delta$,
\begin{displaymath}
  \| Bx \| \geq ( \cos \delta - \sin \delta) \| u \| - | u^T \rho|.
\end{displaymath}
Since both $\|u\| \rightarrow \sigma \sqrt{c}$ and $u^T \rho
\rightarrow 0$ a.s., we conclude that w.p. $\rightarrow 1$,
\begin{equation}
  \label{eq:bxlower}
  \inf_{N_\delta} \|  Bx \| \geq \hf \sigma \sqrt{c} \cos \delta.
\end{equation}

Turning to the angle between $x$ and $Bx$, we find from
(\ref{eq:xcoord}) and (\ref{eq:bx}) that
\begin{displaymath}
  x^T Bx = 2 (\cos^2 \delta) (\rho^T u) + 2 \cos \delta \sin \delta
  (u^T \eta),
\end{displaymath}
and so, uniformly over $N_\delta$,
\begin{displaymath}
  | x^T Bx | \leq 2 \cos^2 \delta | \rho^T u| + ( \sin 2 \delta) \| u
    \|.
\end{displaymath}

Consequently, using $\| x \|=1$ and (\ref{eq:bxlower}),
w.p. $\rightarrow 1$, and for $\delta < \pi/4$, say,
\begin{equation}
  \label{eq:cosbd}
  | \cos \angle(Bx,x)| 
  = \frac{|x^T Bx|}{\|x\| \| Bx \|}
  \leq \frac{2 \sigma \sqrt{c} \sin 2\delta}{\hf \sigma \sqrt{c} \cos
  \delta} 
  \leq c_2 \delta.
\end{equation}

Now return to Figure \ref{fig:inconsist}. 
As a prelude to step (iii), we establish a lower bound for $\alpha =
\angle ( \hat \rho,D \hat \rho).$
Applying the sine rule (\ref{eq:sinerule}) to $\xi = D \hat \rho$ and
$\eta = \hat \lambda \hat \rho = D \hat \rho + B \hat \rho$, we obtain
\begin{equation}
  \label{eq:Drhotri}
  \sin \angle ( D \hat \rho, \hat \rho) =
    \frac{\| B \hat \rho \|}{ \| D \hat \rho \|} \sin \angle ( B \hat
    \rho, \hat \rho ).
\end{equation}
On the assumption that $\hat \rho \in N_\delta$, bound
(\ref{eq:cosbd}) yields 
\begin{displaymath}
  \sin \angle ( B \hat \rho, \hat \rho ) \geq \sin (\pi/2 - c_3 \delta),
\end{displaymath}
and (\ref{eq:bxlower}) implies that 
\begin{displaymath}
  \| B \hat \rho \| \geq \hf \sigma \sqrt c \cos \delta.
\end{displaymath}
On the other hand, since $\| \hat \rho \| = 1$,
\begin{align*}
  \| D \hat \rho \| & \leq \| D \| \leq 1 + v_s + 
      \sigma^2 \lambda_{max}(n^{-1} Z  Z^T) \\
 & \leq [1 + \sigma^2(1+\sqrt{c})^2](1+o(1)), \notag
\end{align*}
w.p. 1 for large $n$.

Combining the last three bounds into (\ref{eq:Drhotri}) shows that
there exists a positive $\alpha(\delta; \sigma,c)$ such that if $\hat
\rho \in N_\delta$, then w.p. 1 for large $n$,
\begin{displaymath}
  \sin \alpha \geq \sin \alpha(\delta; \sigma,c) > 0.
\end{displaymath}

Returning to Figure \ref{fig:inconsist}, 
consider $\angle( D\hat \rho + B\hat \rho, D\hat \rho -  B\hat \rho) = \alpha
+ \gamma$. Since $\beta \geq \pi/2 - c_3 \delta$, we clearly have 
$\alpha+\gamma \leq \pi - \beta \leq \pi/2 + c_3 \delta$ and hence
\begin{align*}
  \angle( D\hat \rho  + B\hat \rho , D\hat \rho  -  B\hat \rho ) 
  & = \min \{ \alpha + \gamma, \pi -  \alpha - \gamma \} \\
  & \geq  \min \{ \alpha, \pi/2 - c_3 \delta \}.
\end{align*}

In particular, with $\delta \leq \delta_0(\sigma,c)$,
\begin{displaymath}
  \angle( \hat \rho, S_- \hat \rho ) 
  \geq \min \{ \alpha(\delta), \pi/2 - c_3 \delta \} = \alpha(\delta),
\end{displaymath}
which is our step (iii).
As mentioned earlier, Lemma \ref{lem:perturb} applied to $M = S_-$
entails that $\angle ( \hat \rho, \hat \rho_-) \geq (1/3) \alpha
(\delta)$. 
For the rest of the proof, we write $\hat \rho_+$ for $\hat \rho$.
To summarize to this point, we have shown that if 
$\angle (\hat \rho_+, \rho) \leq \delta$, then
w.p. $\rightarrow 1$, 
\begin{equation}
  \label{eq:alphalower}
  \angle ( \hat \rho_+, \hat \rho_-) \geq (1/3) \alpha (\delta).
\end{equation}

Note that $S$ and $S_-$ have the same distribution:
viewed as functions of random terms $Z$ and $v$:
\begin{displaymath}
  S_-(Z,v) = S_+(Z,-v).
\end{displaymath}
We call an event $\mathcal{A}$ symmetric if $(Z,v) \in \mathcal{A}$
iff $(Z,-v) \in \mathcal{A}$. For such symmetric events
\begin{displaymath}
  E [ \angle(\hat\rho_+, \rho), \mathcal{A} ] =
  E [ \angle(\hat\rho_-, \rho), \mathcal{A} ].
\end{displaymath}
From this and the triangle inequality for angles 
\begin{displaymath}
  \angle(\hat\rho_+, \rho) + \angle(\rho, \hat\rho_-) \geq 
  \angle(\hat\rho_+, \hat\rho_-),
\end{displaymath}
it follows that
\begin{equation}
  \label{eq:symmetry}
  E [ \angle(\hat\rho_+, \rho), \mathcal{A} ] \geq
  \hf E [ \angle(\hat\rho_+, \hat\rho_-), \mathcal{A} ]
\end{equation}
Hence
\begin{displaymath}
   E[ \angle(\hat\rho_+, \rho) ] \geq
   E[ \angle(\hat\rho_+, \rho), \mathcal{A}^c ] +
   \hf E [ \angle(\hat\rho_+, \hat\rho_-), \mathcal{A} ].
\end{displaymath}

By the symmetry of the distributions, conclusion (\ref{eq:alphalower})
is also obtained w.p. $\rightarrow 1$ if $\angle (\hat \rho_-, \rho)
\leq \delta$.  
Consequently, letting $\mathcal{A}$ refer to the symmetric event
$ \mathcal{A}_\delta = \{
\angle(\hat\rho_+,\rho) \leq \delta \} \cup \{ \angle(\hat\rho_-,\rho)
\leq \delta \}$, we have
\begin{align*}
  E[ \angle(\hat\rho_+, \rho) ] 
  & \geq \delta P( \mathcal{A}_\delta^c) + \hf E[ \angle( \hat\rho_+,
  \hat\rho_-), \mathcal{A}_\delta ] \\
  & \geq \min \{ \delta, \alpha(\delta)/6 \} (1 + o(1)).
\end{align*}
This completes the proof of Theorem \ref{th:lowerbd}. The lower bound
proof for Theorem \ref{th:multi-cpt} proceeds similarly, but is
omitted -- for some extra detail, see \citet{lu02}.


\subsection{Proof of Theorem \ref{th:fe-fi}.}
\label{sec:theorem4}

We may assume, without loss of generality, that $\sigma_1^2 \geq
\sigma_2^2 \geq \cdots \geq \sigma_p^2.$

\medskip
\textit{False inclusion.} For any fixed constant $t$,
\begin{displaymath}
  \hat \sigma_i^2 \geq t \, \ \text{for} \, \ i = 1, \ldots, k \ \
  \text{and} \ \  \hat \sigma_l^2 < t \\
  \ \ \Rightarrow \ \ \hat \sigma_l^2 < \hat \sigma_{(k)}^2.
\end{displaymath}
This threshold device leads to bounds on error probabilities using
only marginal distributions. For example, consider false inclusion of
variable $l$:
\begin{displaymath}
  P \{ \hat \sigma_l^2 \geq \hat \sigma_{(k)}^2 \} 
  \leq \sum_{i=1}^k P \{ \hat \sigma_i^2 < t \} + P \{ \hat \sigma_l^2
  \geq t \}. 
\end{displaymath}
Write $\bar M_n$ for a $\chi^2_{(n)}/n$ variate, and note from
(\ref{eq:chisqassn}) that $\hat \sigma_\nu^2 \sim \sigma_\nu^2 \bar
M_n$.
Set $t = \sigma_k^2(1-\epsilon_n)$ for a value of $\epsilon_n$ to be
determined. 
Since $\sigma_i^2 \geq \sigma_k^2$ and $\sigma_l^2 \leq
\sigma_k^2(1-\alpha_n)$, we arrive at 
\begin{align*}
  P \{ \hat \sigma_l^2 \geq \hat \sigma_{(k)}^2 \} 
  & \leq k P \{ \bar M_n < 1 - \epsilon_n \} + P \Bigl\{ \bar M_n \geq 
   \frac{1- \epsilon_n}{1 - \alpha_n} \Bigr\} \\
 & \leq k \exp \Bigl\{ - \frac{n \epsilon_n^2}{4} \Bigr\} + 
          \exp \Bigl\{ -\frac{3n}{16} 
                  \Bigl(\frac{\alpha_n - \epsilon_n}{1-\alpha_n}\Bigr)^2 
 \Bigr\} 
\end{align*}
using large deviation bound (\ref{eq:chi2lo}). With the choice 
$\epsilon_n = \sqrt 3 \alpha_n / (2 + \sqrt 3)$, both exponents are
bounded above by $-b(\gamma) \log n$, and so
$P \{ FI \} \leq p (k+1) n^{-b(\gamma)}$.

\medskip
\textit{False exclusion.}  The argument is similar, starting with the
remark that for any fixed $t$,
\begin{displaymath}
  \hat \sigma_i^2 \leq t \, \ \text{for} \, \ i \geq k, i \neq l \ \
  \text{and} \ \  \hat \sigma_l^2 \geq t \\
  \ \ \Rightarrow \ \ \hat \sigma_l^2 \geq \hat \sigma_{(k)}^2.
\end{displaymath}

Consequently, if we set $t = \sigma_k^2(1+\epsilon_n)$ and use
$\sigma_l^2 \geq \sigma_k^2 (1 + \alpha_n)$, we get
\begin{align*}
  P \{ \hat \sigma_l^2 < \hat \sigma_{(k)}^2 \} 
  & \leq \sum_{i\geq k} P \{ \hat \sigma_i^2 > t \} + P \{ \hat \sigma_l^2
  < t \} \\ 
  & \leq (p-1) P \{ \bar M_n > 1 + \epsilon_n \} + 
      P \bigl\{ \bar M_n > \frac{1+ \epsilon_n}{1 + \alpha_n} \bigr\} \\
  & \leq (p-1) \exp \Bigl\{ - \frac{3n\epsilon_n^2}{16}  \Bigr\} + 
          \exp \Bigl\{ -\frac{n}{4}
          \Bigl(\frac{\alpha_n - \epsilon_n}{1+\alpha_n}\Bigr)^2 
               \Bigr\}, 
 \end{align*}
this time using (\ref{eq:chi2hi}).

The bound $P\{ FE \} \leq pk n^{-b(\gamma)} + k e^{- b(\gamma) (1 - 2
  \alpha_n) \log n}$  follows on setting
$\epsilon_n = 2 \alpha_n/(2 + \sqrt 3)$ and noting that $(1 +
\alpha_n)^{-2} \geq 1 - 2 \alpha_n$ .

For numerical bounds, we may collect the preceding bounds in the form
\begin{equation}
  \label{eq:p-bound}
  P(FE \cup FI) \leq 
    [pk + (p-1)(k-1)] e^{-b(\gamma) \log n} 
    + p e^{-b(\gamma) \log n /(1-\alpha_n)^2}
    + (k-1) e^{-b(\gamma) \log n /(1+\alpha_n)^2}.
\end{equation}

\subsection{Proof of Theorem \ref{th:consistency}}
\label{sec:proof-theorem-consist}

\textit{Outline}. \ 
Recall that $\gamma_n = \gamma (n^{-1} \log n)^{1/2},$  that the
selected subset of variables $\hat I$ is defined by
\begin{displaymath}
  \hat I = \{ \nu: \hat \sigma_\nu^2 \geq \sigma^2(1+ \gamma_n) \}
\end{displaymath}
and that the estimated principal eigenvector based on $\hat I$ is
written $\hat \rho_I$. We set
\begin{displaymath}
  \rho_I = ( \rho_\nu : \nu \in \hat I ),
\end{displaymath}
and will use the triangle inequality 
$d(\hat \rho_I, \rho) \leq d( \hat \rho_I, \rho_I) + d( \rho_I, \rho)$
to show that $\hat \rho_I \rightarrow \rho$. There are three main steps.

(i) Construct deterministic sets of indices
\begin{displaymath}
  I_n^\pm = \{ \nu: \rho_\nu^2 \geq \sigma^2 a_\mp \gamma_n \}
\end{displaymath}
which bracket $\hat I$ almost surely as $n \rightarrow \infty$:
\begin{equation}
  \label{eq:bracket}
  I_n^- \subset \hat I \subset I_n^+ \qquad \qquad \text{w.p.} \ 1.
\end{equation}

(ii) the uniform sparsity, combined with $\hat I^c \subset I_n^{-c},$ is
used to show that
\begin{displaymath}
  d(\rho_I,\rho) \stackrel{a.s.}{\rightarrow} 0.
\end{displaymath}

(iii) the containment $\hat I \subset I_n^+$, combined with $| I_n^+ |
= o(n)$ shows via methods similar to Theorem \ref{th:fe-fi} that 
\begin{displaymath}
  d( \hat \rho_I, \rho_I ) \stackrel{a.s.}{\rightarrow} 0.
\end{displaymath}


\medskip
\textit{Details.} \ 
Step (i). We first obtain a bound on the cardinality of $I_n^\pm$
using the uniform sparsity conditions (\ref{eq:unif-sparsity}).
Since $|\rho|_{(\nu)} \leq C \nu^{-1/q}$
\begin{align*}
  |I_n^\pm| & \leq |\{ \nu ~:~ C^2 \nu^{-2/q} \geq \sigma^2 a_\mp
  \gamma_n \}|, \\
            & \leq C^q/(\sigma^2 a_\mp \gamma_n  )^{q/2} = o(n^{1/2}).
\end{align*}

Turning to the bracketing relations (\ref{eq:bracket}), we first
remark that $\hat \sigma_\nu^2 \stackrel{\mathcal{D}}{=} \sigma_\nu^2
\chi_{(n)}^2/n$, and when $\nu \in I_n^\pm$,
\begin{displaymath}
  \sigma_\nu^2 = \sigma^2(1 + \rho_\nu^2/\sigma^2) \geq \sigma^2 (1+
  a_\mp \gamma_n).
\end{displaymath}
Using the definitions of $\hat I$ and writing $\bar M_n$ for 
a random variable with the
distribution of $\chi^2_{(n)}/n$, we have
\begin{align*}
  P_n^- = P( I_n^- \nsubseteq \hat I ) & \leq 
        \sum_{\nu \in I_n^-} P \{ \hat \sigma_\nu^2 < \sigma^2(1+
        \gamma_n) \}      \\
   & \leq |I_n^- | P \{ \bar M_n < (1+\gamma_n)/(1+ a_+ \gamma_n ) \}.
\end{align*}
We apply (\ref{eq:chi2lo}) with $\epsilon_n = (a_+ -1) \gamma_n/(1+
a_+ \gamma_n)$ and for $n$ large and $\gamma'$ slightly smaller than
$\gamma^2$, 
\begin{displaymath}
  n \epsilon_n^2 > (a_+ -1)^2 \gamma' \log n,
\end{displaymath}
so that
\begin{displaymath}
  P_n^- \leq c n^{1/2} \exp \{ - n \epsilon_n^2 /4 \} \leq c n^{1/2 -
  \gamma_+^{''}} 
\end{displaymath}
with $\gamma_+^{''} = (a_+-1)^2 \gamma'/4.$
If $\sqrt{\gamma} \geq 12,$ then $\gamma_+^{''} \geq 3$ for suitable
$a_+ > 2.$

The argument for the other inclusion is analogous:
\begin{align*}
  P_n^+ = P( \hat I \nsubseteq  I_n^+ ) & \leq 
        \sum_{\nu \notin I_n^+} P \{ \hat \sigma_\nu^2 \geq \sigma^2(1+
        \gamma_n) \}      \\
   & \leq p P \{ \bar M_n \geq (1+\gamma_n)/(1+ a_- \gamma_n )
        \} \\
   & \leq p n^{ - \gamma_-^{''}},
\end{align*}
with $\gamma_-^{''} = 3(1-a_-)^2 \gamma'/16$ so long as $n$ is large
enough.
If $\sqrt{\gamma} \geq 12$, then $\gamma_-^{''} > 2$ for suitable $a_-
< 1 - \sqrt{8/9}$.

By a Borel-Cantelli argument, (\ref{eq:bracket}) follows from the
bounds on $P_n^-$ and $P_n^+$.

\bigskip

Step (ii). For $n > n(\omega)$ we have $I_n^- \subset \hat I$ and so
\begin{displaymath}
  \| \rho_I - \rho \|^2 = \sum_{\nu \notin \hat I } \rho_\nu^2
  \leq \sum_{I_n^{-c}} \rho_\nu^2.
\end{displaymath}
When $\nu \in I_n^{-c}$, we have by definition
\begin{displaymath}
  \rho_\nu^2(n) < \sigma^2 a_+ \gamma \sqrt{ n^{-1} \log n } :=
  \epsilon_n^2, 
\end{displaymath}
say, while the uniform sparsity condition entails
\begin{displaymath}
  |\rho|_{(\nu)}^2 \leq C^2 \nu^{-2/q}.
\end{displaymath}

Putting these together, and defining $s_* = s_*(n)$ as the
solution of the equation $C s^{-1/q} = \epsilon_n$, we obtain 
\begin{align*}
  \sum_{I_n^{-c}} \rho_\nu^2
  & \leq \sum_\nu \epsilon_n^2 \wedge \rho_\nu^2 \\
  & = \sum_\nu \epsilon_n^2 \wedge |\rho|_{(\nu)}^2 \\ 
  & \leq \sum_\nu \epsilon_n^2 \wedge C^2 \nu^{-2/q}  \\ 
  & \leq  \int_0^\infty \epsilon_n^2 \wedge C^2 s^{-2/q} ds \\
  & =  s_* \epsilon_n^2 + q(2-q)^{-1} C^2 s_*^{1-2/q}  \\ 
  & = [2/(2-q)] C^q \epsilon_n^{2-q} \rightarrow 0
\end{align*}
as $n \rightarrow \infty.$ 

\bigskip

Step (iii).  We adopt the abbreviations 
\begin{align*}
   u_I    & = ( u_\nu    ~:~ \nu \in \hat I ), \\ 
   Z_I    & = ( z_{\nu i}    ~:~ \nu \in \hat I, i = 1, \ldots, n ), \\ 
   S_I    & = ( S_{\nu \nu'} ~:~ \nu, \nu' \in \hat I).
\end{align*}
As in the proof of Theorem \ref{th:multi-cpt}, we consider
$ S^*_I =  S_I - \sigma^2 I_{\hat k} =  \rho_I 
\rho^T_I +  E_I$ and note that the perturbation term has the
decomposition
\begin{displaymath}
   E_I = v_s  \rho_I  \rho^T_I +  \rho_I  u^T_I 
            +  u_I  \rho^T_I 
            + \sigma^2 ( n^{-1}  Z_I  Z^T_I - I),
\end{displaymath}
so that
\begin{displaymath}
  \|  E_I \|_2 \leq v_s \|  \rho_I \|_2^2 
        + 2 \|  \rho_I \|_2 \|  u_I \|_2 
        + \sigma^2 [ \lambda_{max}(n^{-1}  Z_I  Z^T_I) - 1 ].
\end{displaymath}

Consider the first term on the right side.
Since $\|  \rho_I- \rho \|_2 \stackrel{a.s.}{\rightarrow} 0$ from
step (ii), it follows that $\|  \rho_I \|_2
\stackrel{a.s.}{\rightarrow} \| \rho \|$.
As before $v_s \stackrel{a.s.}{\rightarrow} 0$, and so the first term
is asymptotically negligible.

Let $Z_{I^+} = ( z_{\nu i}    ~:~ \nu \in  I^+_n, i = 1, \ldots, n
)$ and $ u_{I^+}  = ( u_\nu    ~:~ \nu \in I_n^+ )$.
On the event $\Omega_n = \{ \hat I \subset I_n^+ \}$, we have
\begin{displaymath}
  \|  u_I \| \leq \| u_{I^+} \|
\end{displaymath}
and setting $k_+ = |I_n^+|$, by the same arguments as led to 
(\ref{eq:u2dist}), we have
\begin{displaymath}
  \| u_{I^+} \|^2 \stackrel{\mathcal{D}}{=}
  \sigma^2 (k_+/n) (\chi_{(n)}^2/n) (\chi_{(k_+)}^2/k_+) 
  \stackrel{a.s.}{\rightarrow} 0,
\end{displaymath}
since $k_+ = o(n)$ from step (i).

Finally, since on the event $\Omega_n$, the matrix $Z_{I^+}$ contains
$ Z_I$, along with some additional rows, it follows that
\begin{displaymath}
  \lambda_{max}( n^{-1}  Z_I  Z^T_I -I) 
  \leq \lambda_{max}( n^{-1} Z_{I^+} Z_{I^+}^T -I)
  \stackrel{a.s.}{\rightarrow} 0
\end{displaymath}
by (\ref{eq:geman}), again since $k_+ =o(n)$.
Combining the previous bounds, we conclude that $ \|  E_I \|_2
\rightarrow 0$. 

The separation $ \delta_n = \|  \rho_I \|_2^2 \rightarrow \|
\rho \|_2^2 > 0$ and so by the perturbation bound
\begin{displaymath}
  \text{dist}( \hat \rho_I, \rho_I) 
   \leq (4/ \delta_n) \|  E_I \|_2
   \stackrel{a.s.}{\rightarrow} 0. 
\end{displaymath}

\textbf{Acknowledgements.}  The authors are grateful for helpful
comments from Debashis Paul and the participants at the Functional
Data Analysis meeting at Gainesville, FL. January 9-11, 2003.
This work was supported in part by
grants NSF DMS 0072661 and NIH EB R01 EB001988.

\bibliographystyle{agsm}

\end{document}